\newtheorem{theorem}{Theorem}[section]
\newtheorem{cor}[theorem]{Corollary}
\newtheorem{lem}[theorem]{Lemma}
\newtheorem{prop}[theorem]{Proposition}
\theoremstyle{definition}
\newtheorem{example}[theorem]{Example}
\newtheorem{defi}[theorem]{Definition}
\newtheorem{rem}[theorem]{Remark}
\numberwithin{equation}{section}
\DeclareMathOperator{\TOP}{TOP}
\DeclareMathOperator{\Iso}{Iso}
\DeclareMathOperator{\Hom}{Hom}
\DeclareMathOperator{\Aut}{Aut}
\DeclareMathOperator{\Ve}{Vert}
\DeclareMathOperator{\dgVect}{dgVect}
\DeclareMathOperator{\Edge}{Edge}
\DeclareMathOperator{\im}{Im}
\DeclareMathOperator{\Ker}{Ker}
\DeclareMathOperator{\Det}{Det}
\DeclareMathOperator*{\colim}{colim}
\DeclareMathOperator{\TFT}{TFT}
\DeclareMathOperator{\OTFT}{OTFT}
\DeclareMathOperator{\st}{st}
\newcommand{\V}{{\EuScript V}}
\def\G{\mathcal G}
\def\M{\mathscr M}
\def\MM{\mathcal M}
\def\C{\mathcal C}
\def\De{\Det^{d}}
\def\Mo{{\mathscr M}}
\def\End{\operatorname{End}}
\def\ra{\rightarrow}
\def\Z{\mathbb{Z}}
\def\S{\mathbb{S}}
\def\M{\mathbb{M}}
\def\D{\mathfrak{D}}
\def\ground{\mathbf k}
\def\K{\mathsf K}
\def\Ga{\overline{\Gamma}}
\newcommand{\noproof}{\begin{flushright}\ensuremath{\square}\end{flushright}}
\newcommand{\Ass}{\mathcal{A}\!\mathit{ss}}
\newcommand{\Comm}{\mathcal{C}\!\mathit{om}}
\newcommand{\Lie}{\mathcal{L}\!\mathit{ie}}
\newcommand{\Leg}{\operatorname{Leg}}
\newcommand{\BV}{\mathsf{BV}}
\newcommand{\bv}{\mathsf{bv}}
\newcommand{\Flag}{\operatorname{Flag}}
\def\O{\mathcal O}
\def\F{\mathsf F}
\newcommand\ft[2]{\F_{#1}{#2}}
\newcommand\dft[2]{\F_{#1}^{\vee}{#2}}
\def\Fo{\ft{}{\O}}
\def\Fvo{\dft{}{\O}}
\def\FDo{\ft{\D}{\O}}
\def\FvDo{\dft{\D}{\O}}
\def\FDeo{\ft{\De}{\O}}
\def\FvDeo{\dft{\De}{\O}}
\def\BVDo{\BV_{\D}\O}
\def\BVDeo{\BV_{\De}\O}
\def\BVDotemp{\BV^{\prime}_{\D}\O}
\def\bvDo{\bv_{\D}\O}
\def\bvDeo{\bv_{\De}\O}
\def\gd{\gamma}
\def\bd{\beta}
\def\emo{\O}
\newcommand\mco[2]{\overline{#1}^{#2}}
\newcommand\mcu[2]{\underline{#1}^{#2}}
\def\Cyc{\O}
\def\Cycgen{C}
\def\Cyco{\mco{\Cyc}{}}
\def\Cycu{\mcu{\Cyc}{}}
\def\Cycod{\mco{\Cyc}{d}}
\def\Cycud{\mcu{\Cyc}{d}}
\def\Commo{\mco{\Comm}{}}
\def\Commu{\mcu{\Comm}{}}
\def\Commod{\mco{\Comm}{d}}
\def\Commud{\mcu{\Comm}{d}}
\def\Asso{\mco{\Ass}{}}
\def\Assu{\mcu{\Ass}{}}
\def\Assod{\mco{\Ass}{d}}
\def\Assud{\mcu{\Ass}{d}}
\def\Assuo{\mcu{\Ass}{1}}
\def\KAss{\mathsf{K}\Ass}
\def\KAssd{\KAss^d}
\def\KAsso{\KAss^1}
\def\RAssd{\mathsf{R}\Ass^d}
\def\KRAssd{\mathsf{KR}\Ass^d}
\def\SAssd{\mathsf{S}\Ass^d}
\def\ArcCat{\mathcal{A}}
\def\Arc{\operatorname{Arc}}
\thanks{This work was completed during the first author's stay at IHES and he wishes to express his gratitude to this institution
for excellent working conditions. The second author thanks the EPSRC for its support.}
\begin{document}

\title[Dual Feynman transform...]{Dual Feynman transform for modular operads}
\author{J. Chuang \and A.~Lazarev}
\address{IH\'ES, Le Bois-Marie, 35 route de Chartres, F-91440, Bures-sur-Yvette,
France.}
\email{lazarev@ihes.fr}
\address{School of Mathematics\\University of Bristol\\Bristol BS8 1TW\\UK}
\email{joseph.chuang@bristol.ac.uk}
\keywords{Modular operad, Feynman transform, Frobenius algebra, sheaf cohomology, topological field theory}
\subjclass[2000]{55N30, 55U30, 18D50}

\begin{abstract}
We introduce and study the notion of a dual Feynman transform of a modular operad. This generalizes
and gives a conceptual explanation of Kontsevich's dual construction producing graph cohomology
classes from a contractible differential graded Frobenius algebra. The dual Feynman transform of a modular operad is indeed
linear dual to the Feynman transform introduced by Getzler and Kapranov when evaluated on vacuum graphs.
In marked contrast to the Feynman transform, the dual notion admits an extremely simple presentation
via generators and relations; this leads to an explicit and easy description of its algebras.
We discuss a further generalization of the dual Feynman transform whose algebras are not necessarily contractible.
This naturally gives rise to a two-colored graph complex analogous to the Boardman-Vogt topological tree complex.
\end{abstract}

\maketitle
\tableofcontents

\section*{Introduction}
The relationship between operadic algebras and various moduli spaces goes back to Kontsevich's seminal papers \cite{kontfeynman} and \cite{kontsg} where graph homology was also introduced.

Kontsevich proposed two constructions producing classes in the ribbon graph complex. The `direct' construction has as the input a $\Z/2$-graded $A_\infty$-algebra with an invariant scalar product (or symplectic $A_\infty$-algebra in the terminology of \cite{HamLaz1}). The output is a collection of homology classes in the ribbon graph complex (or a corresponding collection of cohomology classes in the moduli spaces of Riemann surfaces with marked points). This construction is now well understood from various standpoints, see, e.g. \cite{HamLaz2}, \cite{Costel}.

The other, `dual', construction starts with a contractible differential graded Frobenius algebra with an odd scalar product and gives rise to a collection of cohomology classes in the ribbon graph complex.
By pairing the direct and the dual constructions it is possible to prove the non-triviality of both. Explicit examples involving  so-called Moore algebras, cf. \cite{HamLaz1}, \cite{lazmod}, \cite{hamilt} will be discussed in future work.

The dual construction was motivated by the combinatorics of Feynman graphs in the quantum Chern-Simons theory. Its conceptual formulation based on the Batalin-Vilkovisky formalism is given in \cite{HamLaz3} (for the commutative graph complex). It is probable that the approach of \cite{HamLaz3} could be generalized to accommodate the case of general modular operads, using the language of operadic noncommutative geometry as it is presented in e.g. \cite{Gin}.

The purpose of this paper is to give the most general formulation of Kontsevich's dual construction  from the point of view of modular operads.  We introduce the notion of a `dual Feynman transform' $\Fvo$ for a differential graded modular operad $\O$.  It turns out that $\Fvo$ is itself a modular operad (more precisely, a mild generalization thereof which we call an \emph{extended modular operad}). In other words, there is no twisting as in the case of the usual or `direct' Feynman transform of \cite{GeK}. Moreover, the vacuum part of $\Fvo$, corresponding to graphs without legs, is indeed
linear dual
to $\Fo$. On the other hand the components corresponding to graphs with legs are contractible, in contrast to $\Fo$. Another striking difference between the direct and dual notions is that the dual Feynman transform $\Fvo$ admits an extremely simple presentation in terms of generators and relations; it is obtained from $\Fo$ by freely adjoining a generator $s$ of square zero and such that $d(s)=\mathbf 1$ where $\mathbf 1$ is the operadic unit. This leads to the correspondingly simple description of algebras over $\Fvo$ as $\O$-algebras with a choice of a contracting homotopy. Note that the description of algebras over $\Fo$ is much more complicated; it is given in terms of a quantum master equation, cf. \cite{bar}.

Applying this construction to the modular operad $\OTFT$ (open topological field theory) whose algebras are noncommutative Frobenius algebras we recover Kontsevich's dual construction. In fact we find that its original formulation needs to be modified in order
to produce cohomology classes. This can be achieved by imposing a rather stringent condition on the input Frobenius algebra. Alternatively one can modify the ribbon graph complex by allowing the contraction of certain (or all) loops. The latter approach corresponds to compactifying the moduli space of metric ribbon graphs, and the dual construction in fact produces cohomology classes of certain constructible sheaves on these compactifications.

We also introduce a certain generalization of the dual Feynman transform for a modular operad $\O$ which we call the Boardman-Vogt (or BV) resolution as suggested by Sasha Voronov. The BV-resolution is closely related to the canonical free resolution of $\O$ given by the twice-iterated Feynman transform of $\O$; it could be used to construct a minimal model of an $\O$-algebra. An algebra over the BV-resolution of $\O$ is an $\O$-algebra supplied with a Hodge-like decomposition compatible with the action of $\O$. Note that the de Rham algebra of a smooth oriented manifold does indeed have such a structure as follows from Hodge theory. This suggests a possible application to Chern-Simons theory on general manifolds.

\subsection*{Notation and conventions}
In this  paper we work mostly in the category of  $\Z/2$-graded
 vector spaces (also known as super-vector spaces) over a field $\ground$ of characteristic zero.
The reason for this choice is that operadic algebras  giving rise to nontrivial classes in graph complexes tend to be $\Z/2$-graded. However all our results (with obvious modifications) continue to hold in the $\Z$-graded context.
The adjective `differential graded' will mean `differential $\Z/2$-graded' and will be abbreviated as `dg'. The category of dg vector spaces over $\ground$ will be denoted by $\dgVect$. All of our unmarked tensors are understood to be taken over $\ground$.
On the other hand, by a \emph{chain complex} we will mean the usual $\Z$-graded notion -- a collection $C_\bullet=\{C_n\},n\in\Z$ of $\ground$-vector spaces together with a differential $d:C_n\ra C_{n-1}$. Similarly a \emph{cochain complex} will mean a collection $C^\bullet=\{C^n\},n\in\Z$  of $\ground$-vector spaces together with a differential $d:C^n\ra C^{n+1}$.
  The suspension of a chain complex  $C_\bullet$ is defined by $(\Sigma C)_i=C_{i-1}$ and that of a cochain complex $C^\bullet$ is defined by $(\Sigma C)^i=C^{i+1}$.
 For a $\Z/2$-graded vector space $V=V_0\oplus V_1$ the symbol $\Pi V$ will denote the \emph{parity reversion} of $V$; thus $(\Pi V)_0=V_1$ while $(\Pi V)_1=V_0$. For $d\in \Z/2$ we set $\Pi^dV:=\begin{cases}\Pi V \text{ if } d=1\\V \text{ if } d=0\end{cases}$. For an ungraded vector space $V$ of dimension $n$
and $d\in \Z/2$
we write $\De(V)$ for the $\Z/2$-graded vector space $(\Pi^n\Lambda^n V)^{\otimes d}\cong(\Pi V)^{\otimes nd}$. For a finite set $S$ we write $\De(S)$ for
$\De(\ground^S)$. Note that $\De(S)^*$ is canonically
isomorphic to $\De(S)$.

We use extensively the results of \cite{GeK}. Here is a brief summary of the relevant terminology (adapted to the $\Z/2$-graded context); the reader is referred to the original paper of Getzler and Kapranov's for details.
An ordered pair $(g,n)$ of nonnegative integers is \emph{stable} if $2(g-1)+n> 0$. The only pairs which are \emph{unstable}, i.e. not stable, are $(0,0)$, $(0,1)$, $(0,2)$ and $(1,0)$.
 The group $\Aut(1,2,\ldots,n)$ is denoted by $\S_n$.
 A \emph{cyclic} $\S$-module is a collection of dg vector spaces
 $\{\V((n))\left|\right. n\geq 0\}$ with an action of $\S_n$ on $V((n))$.
 A \emph{stable} $\S$-module is a collection of dg vector spaces $\{\V((g,n))\left|\right. g,n\geq 0\}$ with an action of $\S_n$ on each $\V((g,n))$, such that $\V((g,n))=0$ for unstable $(g,n)$.
   Furthermore, if $\V$ is a stable $\S$-module and $I$ is a finite set then we set
\[\V((g,I)):=\Bigl[\bigoplus\V((g,n))\Bigr]_{\S_n}\]
where the direct sum is extended over all bijections $\{1,2,\ldots,n\}\ra I$. We also set
$\V((n)):=\bigoplus_{g\geq 0}\V((g,n))$ and $\V((I)):=\bigoplus_{g\geq 0}\V((g,I))$.
We will consider the following stable $\S$-modules:
\[{\mathfrak s}((g,n))=\Pi^{n}\epsilon_n,\]
\[\Pi((g,n))=\Pi\ground.\]
Here $\epsilon _n$ is the alternating character of $\S_n$.

A \emph{graph} is a one-dimensional cell complex; we will only consider connected graphs. From a combinatorial perspective a graph consists of the following data:
\begin{enumerate}
\item
A finite set, also denoted by $G$, consisting of the \emph{half-edges} of $G$.
\item
A partition $\Ve(G)$ of $\Flag(G)$. The elements of $\Ve(G)$ are called the \emph{vertices} of $G$. The half-edges belonging to the same vertex $v$ will be denoted by $\Flag(v)$; the cardinality of $\Flag(v)$ will be called the \emph{valence} of $v$ and denoted by $n(v)$.
\item
An involution $\sigma$ acting on $\Flag(G)$. The \emph{edges} of $G$ are pairs of half-edges of $G$ forming a two-cycle of $\sigma$; the set of edges will be denoted by $\Edge(G)$. The \emph{legs} of $G$ are fixed points of $\sigma$; the set of legs will be denoted by $\Leg(G)$.\end{enumerate}

A \emph{stable} graph is a graph $G$ having each vertex $v\in \Ve(G)$ decorated by a non-negative integer $g(v)$, the \emph{genus} of $v$; it is required that $(g(v),n(v))$ is stable for each vertex $v$ of $G$. The \emph{genus} $g(G)$ of stable graph $G$ is defined by the formula
\begin{equation}\label{gen} g(G)=
\dim(H_1(G))
 + \sum_{v\in\Ve(G)}g(v).\end{equation}

Contracting an edge $e$ in a stable graph $G$ yields a new stable graph $G_e$; the decorations $g(v)$ of the vertices of $G_e$ are defined in the obvious way, so that $g(G_e)=g(G)$.  The set of stable graphs forms a category whose morphisms are generated by isomorphisms and edge-contractions $G\ra G_e$. We denote by $\Gamma((g,n))$ the category whose objects are stable graphs $G$ of genus $g$ with $n$ labeled legs (i.e., equipped with a bijection between
$\{1,\ldots,n\}$ and $\Leg(G)$). Note that $\Gamma((g,n))$ is empty whenever $(g,n)$ is unstable.

For a stable graph $G$ and a stable $\S$-module $\V$ we define the dg vector space
$$\V((G)):= \bigotimes_{v\in Vert(G)}\V((g(v),\Flag(v))$$
of $\V$ decorations on $G$.

Further, set
\[{\mathfrak K}(G)=\Det(\Edge(G));\]
\[\Det^d(G)=\Det^d(H_1(G)).\]

For a category $\C$, we denote by
$[\C]$ the set of isomorphism classes of objects, and by
 $\Iso \C$
the subcategory of isomorphisms.

\section{Main construction}\label{maincon}
In this section we introduce the notion of the dual Feynman transform of a modular operad and give its graphical interpretation. Crucially the dual Feynman transform contains an `unstable' generator; it will be necessary to relax the stability condition of Getzler and Kapranov.
\begin{defi}\
\begin{enumerate}\item An extended stable $\S$-module is a collection of dg vector spaces
$\{\V((g,n))|g,n\geq 0\}$ with an action of $\S_n$ on each
$\V((g,n))$, such that $\V((g,n))=0$ for $(g,n)=(0,0), (0,1), (1,0)$.
\item An extended stable graph is a connected graph $G$ having each vertex $v$ labeled by
a non-negative integer $g(v)$ called the \emph{genus} of $v$.
 We require that vertices of valence $0$ have genus at least $2$ and those of valence $1$ have genus at least $1$.
\end{enumerate}
\end{defi}
\begin{rem}
Our definition is a mild generalization of the notions of a stable $\S$-module and a stable graph, see \cite{GeK},
in that the unstable pair
$(0,2)$ is allowed.  In what follows we introduce analogues of the notions considered by \cite{GeK}
in the context of extended stable graphs and extended $\S$-modules. We will omit most of the details since the treatment
given by Getzler and Kapranov can be carried  over to the present situation almost verbatim.
\end{rem}
The \emph{genus} $g(G)$ of an extended stable graph $G$ is defined by the same formula  (\ref{gen}) as in the stable case.
Furthermore, the set of extended stable graphs of genus $g$ with $n$ labeled legs forms a category, just as
in the case of stable graphs; the morphisms are again generated by isomorphisms of graphs and edge-contractions. We
will denote this category by $\Ga((g,n))$.
Note that the only graphs in $\Ga((g,n))$ for unstable $(g,n)$ are the
strings
$\begin{xymatrix}  @C=2ex@R=2ex@M=-.1EX{
\ar@{-}[r]& \bullet \ar@{-}[rr] && \bullet \ar@{-}[r]&}\end{xymatrix}
\ldots
\begin{xymatrix}  @C=2ex@R=2ex@M=-.1EX{
\ar@{-}[r]& \bullet \ar@{-}[rr] && \bullet \ar@{-}[r]&}\end{xymatrix}
$ of bivalent vertices of genus $0$, which lie in $\Ga((0,2))$.

Suppose $(g,n)$ is stable. Then any extended stable graph
$G\in\Ga((g,n))$ contains at least one
 \emph{stable} vertex, i.e. a vertex $v$ for which $(g(v),n(v))$ is stable.  So $G$ determines a stable graph $\tilde{G}$ obtained from $G$
by forgetting the bivalent vertices of $G$ having genus~$0$. Clearly the association $G\mapsto \tilde{G}$ is a functor
$\Ga((g,n))\ra\Gamma((g,n))$.

Recall that a \emph{hyperoperad}
is a collection~$\D$ of functors from $\Iso\Gamma((g,n))$ to the category $\dgVect$
together with structure maps defined for any map between two stable graphs $f:G_0\ra G_1$:
\[\nu_f:\D(G_1)\otimes\bigotimes_{v\in\Ve(G_1)}\D(f^{-1}(v))\ra\D(G_0)\]
 subject to some natural coherence conditions.

 Any hyperoperad extends to a collection of functors on the categories
$\Iso\Ga((g,n))$: for an extended stable graph $G$ we set $\D(G):=\D(\tilde{G})$ if $(g,n)$ is stable and $\D(G):=\ground$ (with trivial $\S_n$-action) otherwise. The structure maps also extend in an obvious way: if the morphism~$f$ corresponds to contracting
an edge abutting an unstable vertex, i.e. a bivalent vertex of genus~$0$, then the corresponding $\nu_f$ is the identity map.


Given a hyperoperad $\D$ we define an endofunctor $\overline{\M}_\D$ on the category of extended stable $\S$-modules by the formula
\[\overline{\M}_\D\V((g,n))=\colim_{G\in \Iso\Ga((g,n))}\D(G)\otimes\V((G)).\]
Here $\V((G))$ is defined in the same way as in the stable case, and we refer to $\D(G)\otimes\V((G))$ as the space of $\D$-twisted $\V$-decorations on $G$.

The arguments of \cite{GeK} show that the functor $\overline{\M}_\D$ is a triple.

\begin{defi}
An extended modular $\D$-operad is an algebra over the triple $\overline{\M}_\D$. For an extended stable $\S$-module $\V$ the corresponding extended stable  $\S$-module $\overline{\M}_\D\V$ is called
the free extended modular $\D$-operad generated by $\V$.
\end{defi}

If $\V$ is stable the colimit defining
$\overline{\M}_\D\V((g,n))$ may be taken over $\Gamma((g,n))$,
and we recover Getzler and Kapranov's triple ${\M}_\D$ in the category of stable $\S$-modules. Thus a modular $\D$-operad is precisely an extended modular $\D$-operad whose underlying $\S$-module is stable.

Now let $\EuScript{S}$ be the extended stable $\S$-module for which $\EuScript{S}((g,n))=0$ if $(g,n)\neq (0,2)$ and $\EuScript{S}((0,2))=\Pi^l\ground$, a shifted copy of the ground field with the trivial action of $\S_2$.
We will denote by $s=\Pi^l1$ the canonical basis element in $\EuScript{S}((0,2))$.
Let $\O$ be a modular $\D$-operad and consider the coproduct of $\O$ and $\overline{\M}_\D\EuScript{S}$ in the category of extended modular $\D$-operads. Informally speaking, this is the $\D$-operad over $\O$ \emph{freely generated} by the operation $s$. To emphasize this point of view, we will denote
this operad by $\O[s]$.
 We have the following simple but crucial result.
\begin{lem}\label{key}
There is an isomorphism \[\O[s]((g,n))\cong \colim_{G\in \Iso\Ga((g,n))}\Det^l(\Edge(G))\otimes\D({G})\otimes\O((\tilde{G})).\] (If $(g,n)$ is unstable then $\tilde{G}$ is undefined and we read $\O((\tilde{G}))$ as $\ground$.)
\end{lem}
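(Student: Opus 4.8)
The plan is to compute the coproduct $\O[s]=\O\sqcup\overline{\M}_\D\EuScript{S}$ directly, following the Getzler--Kapranov graphical calculus as closely as possible; the two genuinely new inputs are the unstable generator $s$ in degree $(0,2)$ and the determinant twist on the right-hand side. Since $\overline{\M}_\D\EuScript{S}$ is free on $\EuScript{S}$, a morphism out of $\O[s]$ into an extended modular $\D$-operad $\mathcal P$ is the same as an operad morphism $\O\ra\mathcal P$ together with an $\S_2$-invariant element of $\mathcal P((0,2))$, namely the image of $s$. I would exploit this by realizing $\O[s]$ as a quotient of the free extended modular $\D$-operad $\overline{\M}_\D(\O\oplus\EuScript{S})$ generated by the underlying $\S$-module $\O\oplus\EuScript{S}$, the relations being exactly those imposing the $\O$-algebra structure on the $\O$-summand; concretely $\O[s]$ is the reflexive coequalizer
\[\overline{\M}_\D\bigl(\overline{\M}_\D\O\oplus\EuScript{S}\bigr)\rightrightarrows\overline{\M}_\D(\O\oplus\EuScript{S})\lra\O[s],\]
with the parallel arrows induced by the structure map $\alpha$ of $\O$ and by the triple multiplication. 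The whole problem thereby reduces to unwinding this free object and the effect of the coequalizer.

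Unwinding $\overline{\M}_\D(\O\oplus\EuScript{S})((g,n))=\colim_{G\in\Iso\Ga((g,n))}\D(G)\otimes(\O\oplus\EuScript{S})((G))$, a nonzero summand decorates each vertex of $G$ either by an $\O$-operation, at a stable vertex, or by $s$, at a bivalent genus-$0$ vertex, this being the only place where $\EuScript{S}$ is supported. Passing to the coequalizer contracts, through $\alpha$, every connected subgraph all of whose vertices carry $\O$-decorations. After this contraction the surviving $\O$-vertices are precisely the vertices of the associated stable graph $\tilde G$, while the remaining bivalent genus-$0$ vertices record how the edges and legs of $\tilde G$ are resolved in $G$ into chains bearing the inserted copies of $s$. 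This produces the factors $\D(G)=\D(\tilde G)$ and $\O((\tilde G))$ of the claimed formula and reinstates $\Iso\Ga((g,n))$ as the indexing groupoid, the unstable pairs being the degenerate case in which no $\O$-vertex survives and $\O((\tilde G))$ is read as $\ground$.

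The substantive point is that the tensor product of the parity shifts carried by the occurrences of $s$ is reorganized into the single determinant line $\Det^l(\Edge(G))$. Each copy of $s$ contributes a factor $\EuScript{S}((0,2))=\Pi^l\ground$, an $l$-fold parity reversion pinned to a bivalent vertex; collecting these over all such vertices, redistributing them edge by edge, and keeping track of the Koszul signs produced by the $\S_n$-action and by $\Aut(G)$ is exactly the bookkeeping that governs operadic suspension and the determinant twists of \cite{GeK}. It is here that the canonical identification $\De(S)^*\cong\De(S)$ and the distinguished generator of $\Det^l$ of a one-element set enter. I expect this suspension-and-sign computation --- showing that the reorganization is natural in $G$, equivariant for $\Aut(G)$ and for $\S_n$, and so descends to the colimit with the correct total parity --- to be the main obstacle; the purely combinatorial steps (contracting the $\O$-subgraphs and recognizing $\tilde G$) transcribe the Getzler--Kapranov treatment almost verbatim, as the Remark after the definition of extended stable graphs anticipates.

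Since the two sides carry operadic structure maps of different degrees (the grafting of two graphs creates a new edge and hence an extra factor of $\Det^l$), I would assert the result only at the level of $\S$-modules, as the statement itself does: the output is a family of $\S_n$-equivariant isomorphisms, one for each $(g,n)$. As a consistency check on the suspension reindexing, note that the generator $s$, of parity $l$, is matched with the class supported on the one-edge string, whose determinant line $\Det^l$ of a single edge is likewise of parity $l$, whereas the one-vertex string of parity $0$ accounts for the even class $\mathbf 1=d(s)$.
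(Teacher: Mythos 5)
Your proposal is correct and, at its core, runs on the same two pillars as the paper's proof: the graphical description of free extended modular $\D$-operads, and the regrouping under which each occurrence of $s$ becomes an \emph{edge} of the indexing graph, the shifts $\Pi^l\ground$ assembling $\Aut(G)$-equivariantly into $\Det^l(\Edge(G))$. Where the routes genuinely differ is the reduction to free objects. You attack the reflexive coequalizer $\overline{\M}_\D(\overline{\M}_\D\O\oplus\EuScript{S})\rightrightarrows\overline{\M}_\D(\O\oplus\EuScript{S})$ head-on, by passing to normal forms in which every connected $\O$-decorated subgraph is contracted. The paper instead proves the lemma when $\O=\M_\D\V$ is free --- there $\O[s]=\M_\D(\V\oplus\EuScript{S})$ outright, so no coequalizer analysis is needed --- and then transports the isomorphism along the presentation $\M_\D\M_\D\O\rightrightarrows\M_\D\O\ra\O$, using that this diagram is split on underlying $\S$-modules and hence preserved by the relevant functors. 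The paper's trick buys precisely what your route still owes: (i) that your reflexive coequalizer of operads is computed on underlying $\S$-modules (i.e.\ that $\overline{\M}_\D$ preserves reflexive coequalizers), and (ii) a confluence argument showing that distinct fully contracted normal forms remain linearly independent in the quotient; neither is difficult, but they are the fiddly steps the split-coequalizer argument is designed to sidestep. In exchange, your normal-form analysis is combinatorially sharper than the paper's prose: your ``one $s$ per edge'' rule treats chains of $s$'s abutting legs correctly, whereas the paper's stated recipe (strings of unstable vertices of length $m$ become strings of length $m-1$) is accurate only for chains joining two stable vertices --- the caption of Figure~\ref{GtoGprime} gives the correct general rule. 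One small caveat: your closing consistency check matches the one-vertex string with $\mathbf{1}=d(s)$, but $\mathbf{1}$ lives in $(\O[s])_+$ rather than in $\O[s]$, so at unstable $(g,n)$ the graphical formula is really describing the unitalization; this conflation is harmless and the paper makes it too, in the graphical remark of Section~\ref{maincon}.
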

In other words, this formula expresses $\O[s]((g,n))$ as a space of $\D$-twisted $\O$-decorated extended stable graphs where the decorations are only placed
on the \emph{stable vertices}. The degrees of the graphs are then calculated according to the number of edges and the degree of $s$.
\begin{proof}
Let us assume first that $\O={M}_\D\V$ for some stable $\S$-module $\V$.
It is clear that
\begin{equation}\label{extended}\O[s]((g,n))=
{\M}_\D(\V\oplus\EuScript{S})((g,n))=\bigoplus_{G\in [\Iso \Ga((g,n))]}[\D(G)\otimes(\V\oplus\EuScript{S})(({G}))]_{\Aut(G)}.\end{equation}
Now for any extended stable graph $G$ we denote by $G^\prime$ the extended stable graph obtained from $G$ by contracting all edges connecting stable vertices (including loops at stable vertices)
and replacing all strings of unstable bivalent vertices of length $m$ by strings of length $m-1$.

\begin{figure}[h]
\[
\includegraphics[height=3.5in]{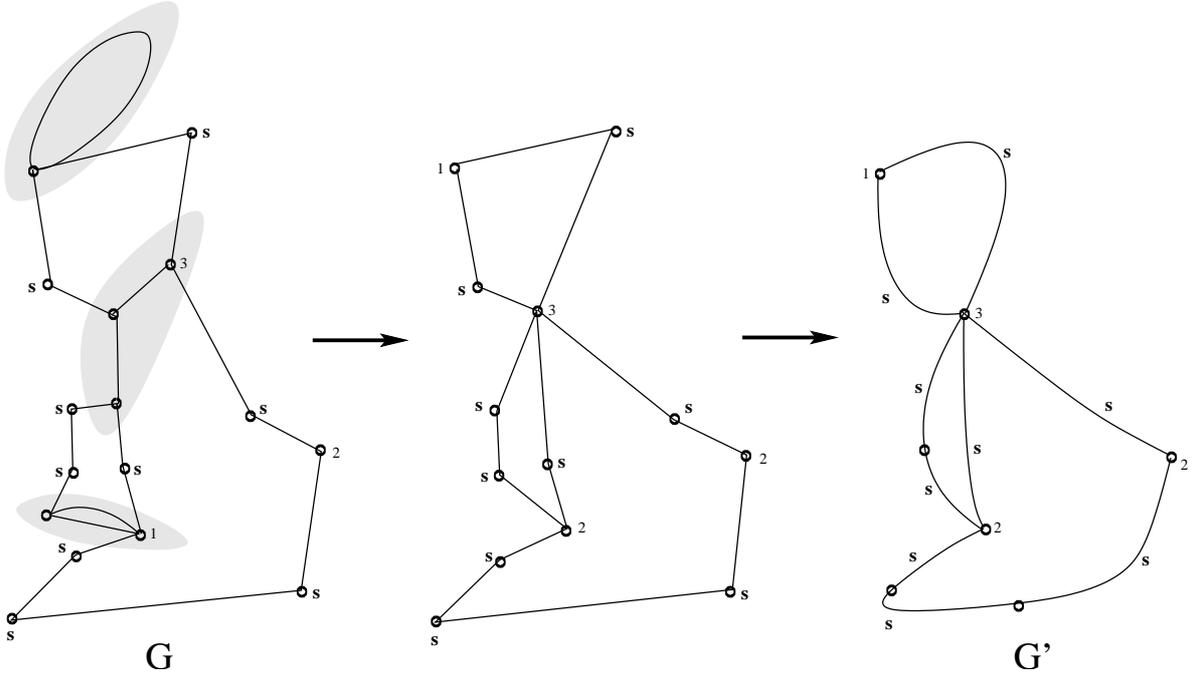}
\]
\caption{Contract edges connecting stable vertices, then replace unstable vertices by edges. Vertices are understood to have genus $0$ unless otherwise notated.\label{GtoGprime}}
\end{figure}
\FloatBarrier

Further, for any given $G^\prime\in\Ga((g,n))$ consider the set of all associated $G$'s and collect the corresponding terms in (\ref{extended}). Clearly, for a given $G^\prime$ we obtain
the $\D$-twisted space of $\O$-decorations on $G^\prime$, tensored with $\bigotimes \Pi^l \ground$, where the product is
taken over the unstable vertices of $G$ or equivalently the \emph{edges} of $G'$.  Here we are implicitly making use of the induced homomorphisms $\Aut(G)\ra\Aut(G')$.
The desired statement is proved in
this special case.

In general we have a canonical split coequalizer in the category of modular $\D$-operads
\[\xymatrix{{\M}_\D{\M}_\D\O\ar@<0.5ex>[r]\ar@<-0.5ex>[r]&{\M}_\D\O\ar[r]&\O},\]
and using the fact that a split coequalizer is preserved by any functor we reduce the general case to the one considered.

\end{proof}

\begin{rem}
Although our proof of Lemma \ref{key} is fairly unilluminating its statement is easy to visualize. The vertices of our decorated graphs
correspond to the operations in the operad $\O$ whilst the edges correspond to the adjoined operation $s$ so a decorated graph itself represents
an iterated composition of elements in $\O$ with $s$.
\end{rem}

\begin{defi}
Let $\D$ be a hyperoperad and $\O$ be a modular $\D$-operad. Define an extended stable $\S$-module  $\O_+$ by the formula
$\O_+((g,n))=\begin{cases} \O((g,n)) & \text{if $(g,n)\neq (0,2)$}\\ \ground & \text{if } (g,n)=(0,2)\end{cases}$. Here $\S_2$ acts trivially
on $\O_+((0,2))$. Furthermore, define the structure of an extended modular $\D$-operad on $\O_+$ by the following composition
\[\overline{\M}_\D\O_+((g,n))=\colim_{G\in\Iso \Ga((g,n))}\D(G)\otimes\O_+((G))\ra \colim_{G\in\Iso \Gamma((g,n))}\D(G)\otimes\O_+((G))\ra\O_+((g,n))\]
where the first arrow is induced by the functor $\Ga((g,n))\ra\Gamma((g,n)): G\mapsto \tilde{G}$ and the second one is given by the structure
of a modular $\D$-operad on $\O$. The canonical basis
element $\mathbf{1}$ of $\O_+((0,2))$ is called the \emph{unit} of $\O_+$, and the extended modular $\D$-operad $\O_+$ is said to be obtained from $\O$ by adjoining a unit.
\end{defi}

From now on we will assume that $\D$ is a \emph{cocycle}, i.e. $\D(G)$ is
one-dimensional (concentrated in either even or odd degree)
for any stable graph $G$ and that the corresponding structure maps $\nu_f$ are isomorphisms. In that case an $\overline{\M}_\D$-algebra structure on $\O$ gives rise
to (possibly parity reversing) operadic composition maps $\circ_{i}:\O((m))\otimes\O((n))\ra \O((m+n-2))$ and $\xi_{ij}:\O((n))\ra\O((n-2))$
by restricting the structure map $\overline{\M}_\D\O\ra\O$ to the graphs having only one  edge.

We are now ready to define our main object of study. Recall from \cite{GeK} that the \emph{Feynman transform} $\FDo$ of a modular
$\D$-operad $\O$ is a modular $\mathfrak{K}\otimes\D^*$-operad whose underlying
stable
$\S$-module is
$$\M_{\mathfrak{K}\otimes\D^*}(\O^*)((g,n))=\colim_{G\in\Gamma((g,n))}\mathfrak{K}(G)\otimes\D(G)^*\otimes
\O((g,n))^*.$$
 The differential on $\FDo$ is the sum of the `internal' differential induced by that on $\O$ and the differential induced by the edge-expansions of stable graphs and the operadic composition in $\O$.
\begin{defi}
Let $\D$ be a cocycle,  and $\O$ be a modular $\D$-operad. The \emph{dual Feynman
transform} of $\O$  is the quotient of the extended modular $\D$-operad $(\O[s])_+$, where $s$ has degree~$1$, by the relation $s^2:=s\circ_1 s=0$.
It will be denoted by $\FvDo$. The differential~$d$
on $\FvDo$ is the unique extension of the differential on~$\O$ satisfying $d(s)=\mathbf{1}$.
\end{defi}

\begin{theorem}\
\begin{enumerate}
\item
The dg vector space $\FvDo((g,n))$ is contractible for $n>0$ and any $g$.
\item
There is an isomorphism of dg vector spaces
$\FvDo((g,0))^*\cong \FDo((g,0))$.
\end{enumerate}
\end{theorem}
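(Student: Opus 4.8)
The plan is to treat the two parts by quite different means: an explicit contracting homotopy for part~(1), and a termwise (graph-by-graph) duality of graph complexes for part~(2).

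For part~(1) I would use the defining relation $d(s)=\mathbf{1}$ directly. Assume $n>0$, so that every graph in $\Ga((g,n))$ has at least one leg, and define a degree-one endomorphism $h$ of $\FvDo((g,n))$ by $h(x)=s\circ_1 x$, the operadic composition of the fixed generator $s\in\FvDo((0,2))$ into the first leg. This lands in $\FvDo((0+g,2+n-2))=\FvDo((g,n))$ and is well defined on the quotient by $s^2=0$, being induced by an operadic composition. Since the differential of the dg operad $\FvDo$ is a derivation for the compositions, the Leibniz rule gives
\[d(h(x))=d(s)\circ_1 x+(-1)^{|s|}\,s\circ_1 d(x)=\mathbf{1}\circ_1 x-s\circ_1 d(x)=x-h(d(x)),\]
where the last step uses $|s|=1$ and the unit axiom $\mathbf{1}\circ_1 x=x$ of $\O_+$. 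Hence $dh+hd=\mathrm{id}$ and $\FvDo((g,n))$ is contractible. The point worth stressing is that the relation $s^2=0$ never enters this computation, so the argument is insensitive to whether $s\circ_1 x$ happens to vanish; the whole identity is forced by $d(s)=\mathbf{1}$ together with the unit and derivation axioms, all of which hold in $\FvDo$.

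For part~(2) I would first obtain an explicit model of the complex $\FvDo((g,0))$. Setting $n=0$ and $l=1$ in Lemma~\ref{key}, imposing $s^2=0$, and using that a bivalent genus-$0$ vertex decorated by $\mathbf{1}$ is forgotten while a bare edge between stable vertices is absorbed into the composition of $\O$, one is left with $\D$-twisted $\O$-decorated stable graphs in which every edge carries exactly one copy of $s$. As $s$ has degree $1$, these copies assemble (with signs) into the line $\Det(\Edge(\Gamma))$, giving a graded isomorphism
\[\FvDo((g,0))\cong\bigoplus_{\Gamma\in[\Iso\Gamma((g,0))]}\bigl[\Det(\Edge(\Gamma))\otimes\D(\Gamma)\otimes\O((\Gamma))\bigr]_{\Aut(\Gamma)}.\]
Comparing with $\FDo((g,0))=\bigoplus_{\Gamma}[\mathfrak{K}(\Gamma)\otimes\D(\Gamma)^*\otimes\O((\Gamma))^*]_{\Aut(\Gamma)}$ and invoking $\mathfrak{K}(\Gamma)=\Det(\Edge(\Gamma))$, the self-duality $\Det(S)^*\cong\Det(S)$, the identity $\O((\Gamma))^{**}=\O((\Gamma))$, and the coincidence of invariants with coinvariants in characteristic zero, the two right-hand sides are termwise linearly dual, yielding $\FvDo((g,0))^*\cong\FDo((g,0))$ as graded vector spaces.

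It remains to match the differentials, and this is where I expect the actual work to lie. The differential on $\FvDo((g,0))$ is the internal differential of $\O$ plus the term coming from $d(s)=\mathbf{1}$: turning a chosen edge's $s$ into the unit contracts that edge and composes the two adjacent $\O$-decorations, so this term is exactly the edge-contraction map $\Gamma\mapsto\Gamma_e$. The Feynman differential on $\FDo((g,0))$ is instead the internal differential of $\O^*$ together with the edge-expansions $\Gamma_e\mapsto\Gamma$. I would verify that contraction and expansion are mutually adjoint, that the internal differentials are linear duals, and---crucially---that under the identification above the composition used to contract an $s$-edge is dual to the co-composition (composition in $\O^*$) used in the matching expansion, with all Koszul signs produced by the degree-one generators $s$ and the lines $\Det(\Edge(\Gamma))$ agreeing with those dictated by $\mathfrak{K}(\Gamma)$ and the structure maps $\nu_f$ of $\D$. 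This sign- and twist-bookkeeping, rather than any conceptual difficulty, is the main obstacle; once it is settled the two differentials are adjoint and the graded duality upgrades to an isomorphism of dg vector spaces, completing part~(2).
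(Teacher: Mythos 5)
Your proposal is correct, and its two parts fare differently against the paper. Part (2) is essentially the paper's own argument: the paper likewise specializes Lemma~\ref{key} to $n=0$, notes that $s^2=0$ kills every graph having an unstable vertex not adjacent to a leg (hence, with no legs present, everything but the stable graphs), identifies the degree-one $s$-factors on edges with $\mathfrak{K}(G)$, invokes characteristic zero to pass between invariants and coinvariants, and then asserts---at about the same level of detail you allow yourself---that the contraction differential induced by $d(s)=\mathbf{1}$ is dual to the edge-expansion differential of $\FDo((g,0))$; so your announced ``sign bookkeeping'' is no less complete than what the paper records. Part (1), however, takes a genuinely different and arguably cleaner route. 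The paper argues combinatorially: for $n>0$ each stable graph contributes its twisted $\O$-decorations tensored with the $2^n$-dimensional space of $\mathbf{1}/s$-markings on legs, which is a tensor product of contractible two-dimensional complexes $\ground\cdot\mathbf{1}\oplus\ground\cdot s$; the component $(0,2)$ is then treated separately by hand. Your homotopy $h(x)=s\circ_1 x$ bypasses all of this: it uses only the unit axiom, the Leibniz rule and $d(s)=\mathbf{1}$, it covers $(0,2)$ uniformly, and, as you note, it even proves contractibility of $(\O[s])_+((g,n))$ before the relation $s^2=0$ is imposed. Your implicit appeals are legitimate: in $(\O[s])_+$ the unit acts as the identity under composition by the very definition of adjoining a unit (the functor $G\mapsto\tilde{G}$ forgets $\mathbf{1}$-decorated bivalent vertices), the differential of a dg operad is by construction a derivation for the $\circ_i$, and composition with $s$ carries no twist since $\nu_f$ is the identity on edges abutting unstable vertices. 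What the paper's heavier approach buys is the explicit graph decomposition itself, whose $n=0$ incarnation is exactly what part (2) requires, so both parts fall out of a single analysis; your version decouples them, at the cost of still needing the graph model for part (2).
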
\label{main}
\begin{proof}
According to Lemma \ref{extended} we have
\[\O[s]((g,n))\cong \colim_{G\in \Iso\Ga((g,n))}\mathfrak{K}(G)\otimes\D(\tilde{G})\otimes\O((\tilde{G})).\] Recall that the
edges of a decorated extended stable graph $G$ represent the operation $s$, and unstable bivalent vertices between  edges correspond to composing $s$ with itself. Since these compositions are zero in $\FvDo$ we conclude that
the only nonzero contributions to the direct sum are from graphs all of whose unstable vertices are adjacent to legs.

Let $n>0$ and $(g,n)\neq (0,2)$. For any stable graph $G\in \Gamma((g,n))$ we need to consider the $2^n$ graphs obtained from $G$ by placing
unstable bivalent vertices on a subset (possibly empty) of legs of $G$. Denote the space spanned by such graphs by $V$; we see therefore that the $(g,n)$-component of $\FvDo$ is isomorphic to  \[\bigoplus_{G\in [\Gamma((g,n))]}\left(\mathfrak{K}(G)\otimes\D(G)\otimes\O((G))\right)_{\Aut(G)}\otimes V.\]
It is clear that $V$ is isomorphic to the tensor product of contractible dg vector spaces of dimension two: $\ground\cdot \mathbf 1\oplus \ground\cdot s$ with $d(s)=\mathbf 1$.
In particular $\FvDo((g,n))$ is contractible. The contractibility of $\FvDo((0,2))$ is obvious since the latter space is spanned by $\mathbf{1}$ and $s$ and $d(s)=\mathbf{1}$. Part (1) of our theorem is proved.

Now let $n=0$. We saw that the unstable vertices are allowed on the legs only and thus we are reduced to considering
the usual \emph{stable} graphs.  We have thus:
\[\O[s]/s^2((g,0))\cong \colim_{G\in \Iso\Gamma((g,0))}\mathfrak{K}(G)\otimes\D({G})\otimes\O(({G})).\]
Recalling our assumption that the characteristic of $\ground$ is zero,
this establishes an isomorphism \[\FvDo((g,0))^*\cong\O[s]/s^2((g,0))^*\cong \FDo((g,0))\] as $\Z/2$-graded vector spaces. The differential on
$\O[s]/s^2((g,0))$ is the sum of the internal differential on tensor powers of $\O$ and the differential induced by the equation $d(s)=\mathbf{1}$. The latter
is the usual graph differential corresponding to the contractions of edges of stable graphs and given by operadic compositions in $\O$. The total differential is therefore dual to that
in $\FDo((g,0))$.
\end{proof}
\begin{rem}
Let us represent the identity operation by the graph $\begin{xymatrix} @C=2ex@R=2ex@M=-.1EX{
\ar@{-}[r]& \bullet  \ar@{-}[r]&}\end{xymatrix}$ and the operation $s$ by the graph $\begin{xymatrix}  @C=2ex@R=2ex@M=-.1EX{
\ar@{-}[r]& \bullet \ar@{-}[rr] && \bullet \ar@{-}[r]&}\end{xymatrix}$ . An element in $\FvDo$ will be represented
by a graph whose stable vertices are decorated by elements in $\O$ inscribed in circles and whose unstable vertices (which are only allowed on the legs) are represented as dots. The differential is induced by contractions of  edges plus the internal differential in $\FvDo$. For example the equations $d(s)=\mathbf{1}$ and $d(\mathbf{1})=0$ are represented as
\[d(\begin{xymatrix}  @C=2ex@R=2ex@M=-.1EX{
\ar@{-}[r]& \bullet \ar@{-}[rr] && \bullet \ar@{-}[r]&}\end{xymatrix})=\begin{xymatrix} @C=2ex@R=2ex@M=-.1EX{
\ar@{-}[r]& \bullet  \ar@{-}[r]&}\end{xymatrix} ~\text{~and~}~ ~d(\begin{xymatrix} @C=2ex@R=2ex@M=-.1EX{
\ar@{-}[r]& \bullet  \ar@{-}[r]&}\end{xymatrix})=0\]

 The operadic composition in $\FvDo$ is described by the following pictures.

\[ \xymatrix{&&&&&&&&&&\\&*+[o][F]{\scriptscriptstyle{x}}*=0\ar@{-}[l]\ar@{-}[dl]\ar@{-}[ul]\ar@{-}[r]&\circ&*+[o][F]
{\scriptscriptstyle{y}}*=0\ar@{-}[l]\ar@{-}[ur]\ar@{-}[dr]&\ar@{-}[l]&\ar@{=}[r]&
&&*+[o][F]{\scriptscriptstyle{x\circ y}}*=0\ar@{-}[l]\ar@{-}[dl]\ar@{-}[ul]\ar@{-}[ur]\ar@{-}[dr]&\ar@{-}[l]\\&&&&&&&&&& }
\]
\[ \xymatrix{&&&&&&&&&&&&\\&*+[o][F]{\scriptscriptstyle{x}}*=0\ar@{-}[l]\ar@{-}[dl]\ar@{-}[ul]\ar@{-}[r]&*=0{\bullet}\ar@{-}[r]&\circ&*+[o][F]
{\scriptscriptstyle{y}}*=0\ar@{-}[l]\ar@{-}[ur]\ar@{-}[dr]&\ar@{-}[l]&\ar@{=}[r]&
&&*+[o][F]{\scriptscriptstyle{x}}*=0\ar@{-}[l]\ar@{-}[dl]\ar@{-}[ul]\ar@{-}[r]&*+[o][F]{\scriptscriptstyle{y}}*=0\ar@{-}[ur]\ar@{-}[dr]&\ar@{-}[l]\\&&&&&&&&&&&&& }
\]
 Thus, the composition in  the $\FvDo$ is not the usual glueing of graphs (which, after all, would not be compatible with our contracting differential), but the glueing \emph{followed  by the contraction of the newly formed edge}. Self-glueings are determined similarly, taking into account the usual convention that contracting a loop results in the genus of the corresponding vertex increasing by one. According to our definition of adjoining a unit to a modular operad the glueing of the two edges of $\begin{xymatrix} @C=2ex@R=2ex@M=-.1EX{
\ar@{-}[r]& \bullet  \ar@{-}[r]&}\end{xymatrix}$ is equal to zero.
The relation $s^2=0$ translates into the following picture.
\[ \xymatrix{&&&&&&&&&&&&\\&*+[o][F]{\scriptscriptstyle{x}}*=0\ar@{-}[l]\ar@{-}[dl]\ar@{-}[ul]\ar@{-}[r]&*=0{\bullet}\ar@{-}[r]&\circ&
*=0{\bullet}\ar@{-}[l]&*+[o][F]
{\scriptscriptstyle{y}}*=0\ar@{-}[l]\ar@{-}[ur]\ar@{-}[dr]&\ar@{-}[l]&\ar@{=}[r]&&\text{zero}\\&&&&&&&&&&&&& }
\]
The last equation gives a graphical explanation why the unstable
vertices -- the dots -- do not appear on the  edges.
\end{rem}

\section{Twisted modular operads and their algebras}\label{twistedalgebras}
In this section we describe algebras over  modular $\D$-operads and discuss some important examples. We will only be concerned with the case $\D=\De$ where $d$ is an integer modulo $2$, i.e. $\D(G)=\Det(H_1(G))^{\otimes d}$ for a stable graph $G$. In particular, if $d=0$ this corresponds to an (untwisted) modular operad. Note that since for a tree graph $G$ the twisting by $\De$ is trivial the genus zero part of any modular $\De$-operad is a usual cyclic operad.
Let $V$ be a dg vector space with a symmetric inner product of degree $d$, i.e. an (even) symmetric map $V\otimes V\ra\Pi^d\ground$. Then the stable $\S$-module $\{\mathcal E((g,n))=V^{\otimes n}\}$
forms a modular $\mathfrak{K}^{d}$-operad, see \cite{bar}. Note that the genus zero part of $\mathcal E$ is not a cyclic operad unless $d=0$; to remedy the situation we twist the operad $\mathcal E$ by the $\S$-module ${\mathfrak s}^{d}\otimes\Pi^{d}$, i.e. consider instead the operad \[\tilde{\mathcal E}((g,n)):={\mathfrak s}^{d}((g,n))\otimes\Pi^{d}((g,n))\otimes\mathcal E((g,n))
\cong\Pi^d(\Pi^d V)^{\otimes n}
.\]
Then $\tilde{\mathcal E}$ is a modular $\De$-operad, and its genus zero part $\tilde{\mathcal E}(0,\bullet)$ forms a cyclic operad. Using the isomorphism $V\cong \Pi^d(V^*)$ we see that that $\tilde{\mathcal E}(0,\bullet)$ is isomorphic to the endomorphism cyclic operad $\{\Hom(V^{\otimes(n-1)},V)\}$.
\begin{defi}
Let $\Cyc$ be a cyclic operad
and $V$ be a dg vector space with equipped with an inner product of degree $d$.
Then a (cyclic) $\Cyc$-algebra structure on $V$ is
 a homomorphism of cyclic operads $\Cyc\ra\tilde{\mathcal E}(0,\bullet)$.
\end{defi}
The notion of a
$\Cyc$-algebra is easy to work out explicitly in the case of a quadratic cyclic operad $\Cyc$. In that case $\Cyc$ is generated by
a set of cyclically invariant elements in $\Cyc((3))$ and a
$\Cyc$-algebra structure on $V$ is specified by a collection of binary operations on $V$ which are cyclically invariant when regarded as elements in $V^{\otimes 3}$.
\begin{example}Let us consider what this definition specializes to in the case of the three standard cyclic operads: commutative, associative and Lie.
\begin{enumerate}\item
Let $\Cyc=\Comm$ be the commutative non-unital cyclic operad: $\Cyc((n))=\begin{cases}0, & n=1,2\\ \ground,&  n\geq 3 \end{cases}$. The operad $\Cyc$ is generated by a single element of even degree in  $\Cyc((3))$ with the trivial action of $\S_3$. It follows that a
cyclic $\Comm$-algebra is a (graded)-commutative dg algebra $A$ with an inner product $\langle,\rangle$ of degree $d$ which is invariant in the sense that $\langle ab,c\rangle=\langle a,bc\rangle$ for any $a, b, c\in A$. In other words, $A$ is a dg commutative Frobenius algebra.\item
Let $\Cyc=\Ass$ be the associative non-unital cyclic operad: $\Cyc((n))=\begin{cases}0, & n=1,2\\ \ground[\S_n/\mathbb{Z}_n], &  n\geq 3 \end{cases}$. The operad $\Cyc$ is generated by two elements of even degree in  $\Cyc((3))$. It follows that an
cyclic $\Ass$-algebra is an associative dga $A$ with an inner product $\langle,\rangle$ of degree $d$ which is invariant in the sense that $\langle ab,c\rangle=\langle a,bc\rangle$ for any $a, b, c\in A$. In other words, $A$ is a (possibly noncommutative) dg Frobenius algebra.\item
Let $\Cyc=\Lie$ be the Lie cyclic operad. It is generated by a single element of even degree in  $\Cyc((3))$ on which $\S_3$ acts by the sign representation; the relations come from the Jacobi identity. It follows that a
cyclic $\Lie$-algebra is a dg Lie algebra $A$ with an inner product $\langle,\rangle$ of degree $d$ which is invariant in the sense that $\langle [a,b],c\rangle=\langle a,[b,c]\rangle$ for any $a, b, c\in A$. For example, any reductive Lie algebra is a cyclic $\Lie$-algebra. The commutator Lie algebra  of an
$\Ass$-algebra is a
$\Lie$-algebra.
\end{enumerate}
\end{example}
Now let $\O$ be a modular $\De$-operad and $\tilde{\mathcal E}=\{\tilde{\mathcal E}((g,n))\}$ be the $\De$-operad associated to a dg vector space $V$ with a symmetric inner product of degree $d$ as defined above.
\begin{defi}\label{modularalgebra}
The structure of a (modular) $\O$-algebra on $V$ is a homomorphism of $\De$-modular operads $\O\ra\tilde{\mathcal{E}}$.
\end{defi}
Recall that for any stable graph $G\in \Gamma((g,n))$ a modular $\De$-operad $\O$ determines a homomorphism $\mu_G:\De(G)\otimes\O((G))\ra\O((g,n))$ which corresponds to taking operadic compositions in $\O((G))$ along the internal edges of $G$. For a structure of an $\O$-algebra on $V$ given by a map $f:\O\ra \tilde{\mathcal{E}}$ we can form the composition \[f\circ\mu_G:\De(G)\otimes\O((G)) \ra\tilde{\mathcal{E}}((g,n))\cong \Pi^{d(n-1)}V^{\otimes n}.\]
We will denote $f\circ\mu_G$ by
$Z_V(G)$
 and call it the \emph{Feynman amplitude} of the decorated graph $G$ corresponding to the algebra $V$.

We are going to introduce certain modular operads closely related to $\Ass$, $\Comm$ and $\Lie$, and describe the algebras over them. To do this we need to use the \emph{modular closure} of a cyclic operad  introduced in \cite{HW}. We will use a slightly more general notion than that considered in the cited reference in order to include the case of $\Det$-operads.

The functor of restricting to the genus zero part of a modular $\De$-operad  takes values in the category of cyclic operads. This functor admits a left adjoint; the value of the latter on a cyclic (non-unital) operad $\Cyc$ will be denoted by $\Cycod$. For $d=0$ we will shorten this notation to $\Cyco$. The existence of $\Cycod$ follows immediately from the description of twisted modular and cyclic operads as algebras over appropriate triples. Indeed, let $\Cyc$ be generated by a cyclic $\S$-module $\{\Cycgen((n))\}$. We can consider it as a stable $\S$-module with $\Cycgen((g,n))=0$ for $g\neq 0$. Then $\Cycod$ is constructed from the free $\De$-operad $\M_{\De}\Cyc$ by quotienting out by the same relations as those defining~$\Cyc$. We will  call~$\Cycod$ the $\De$-modular closure (or simply modular closure for $d=0$) of~$\Cyc$.
We have the following result which follows immediately from definitions.
\begin{prop}
There is a one-to-one correspondence between cyclic $\Cyc$-algebra structures on a dg vector space $V$ with an inner product
of degree $d$ and modular $\Cycod$-algebra structures on $V$.
\end{prop}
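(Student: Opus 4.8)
The plan is to read this off as a direct consequence of the adjunction that defines the $\De$-modular closure, so that essentially no computation is required. Write $R$ for the functor sending a modular $\De$-operad to its genus-zero part, which is a cyclic operad; by construction $\Cycod$ is the value on $\Cyc$ of the left adjoint to $R$. Thus for every modular $\De$-operad $\mathcal P$ there is a natural bijection
\[
\Hom_{\mathrm{mod}}(\Cycod,\mathcal P)\;\cong\;\Hom_{\mathrm{cyc}}(\Cyc,R\mathcal P),
\]
where the left-hand side is taken in modular $\De$-operads and the right-hand side in cyclic operads. The first step is simply to specialise this to $\mathcal P=\tilde{\mathcal E}$, the modular $\De$-operad attached to the inner-product space $V$.

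It then remains to identify the two sides of the resulting bijection with the two notions in the statement. By Definition~\ref{modularalgebra} a morphism $\Cycod\to\tilde{\mathcal E}$ of modular $\De$-operads is exactly a modular $\Cycod$-algebra structure on $V$, so the left-hand side is the set of such structures. For the right-hand side I would use that $R\tilde{\mathcal E}$ equals the cyclic operad $\tilde{\mathcal E}(0,\bullet)$, which is immediate because $\tilde{\mathcal E}(0,\bullet)$ is by definition the genus-zero part of $\tilde{\mathcal E}$ (and the twisting by $\De$ is trivial on trees, so the cyclic structure maps agree). A morphism $\Cyc\to\tilde{\mathcal E}(0,\bullet)$ of cyclic operads is precisely a cyclic $\Cyc$-algebra structure on $V$, and the adjunction bijection therefore matches the two families of structures one-to-one.

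Since the whole argument rests on the adjunction, there is no genuine obstacle: the existence and form of that adjunction were already recorded when $\Cycod$ was introduced, and the identification $R\tilde{\mathcal E}=\tilde{\mathcal E}(0,\bullet)$ is a matter of definition. For a self-contained check I would instead invoke the explicit generators-and-relations description of $\Cycod$ as the quotient of the free modular operad $\M_{\De}\Cyc$ by the defining relations of $\Cyc$. By freeness a morphism $\M_{\De}\Cyc\to\tilde{\mathcal E}$ is the same datum as a map of cyclic $\S$-modules $\Cyc\to\tilde{\mathcal E}(0,\bullet)$ (using that $\Cyc$ is concentrated in genus zero), and such a map descends to $\Cycod$ exactly when it respects those relations, i.e. when it is a homomorphism of cyclic operads. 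This recovers the same bijection by hand and confirms that the content of the proposition is entirely formal.
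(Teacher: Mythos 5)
Your argument is correct and is exactly what the paper intends: the proposition is stated there with no written proof (``follows immediately from definitions''), the definition in question being precisely the adjunction between the genus-zero-part functor and the $\De$-modular closure that you invoke. Your unwinding of that adjunction at $\mathcal P=\tilde{\mathcal E}$, together with the identification of its genus-zero part with the cyclic endomorphism operad, is the intended one-line proof spelled out.
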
\noproof
\begin{rem}
There is another functor from cyclic operads to modular $\De$-operads which is \emph{right} adjoint to the genus zero part functor.
We will denote the value of this functor on a cyclic operad $\Cyc$ by $\Cycud$, and will shorten this to $\Cycu$ for $d=0$. Algebras over $\Cycud$ are characterized by the property that their Feynman amplitudes on stable graphs of positive genus are always zero. Note that $\Cycud$ is the quotient of $\Cycod$ by the ideal generated by the images of all contraction maps $\xi_{ij}:\Pi^d\Cycod((I))\ra \Cycod((I\setminus \{i,j\}))$. One can consider other modular operads, intermediate between $\Cycod$ and $\Cycud$ which have the property that some, but not all, contraction maps are trivial.
\end{rem}
We will now show that the modular $\De$-operads naturally give rise to two-dimensional surfaces. Denote by $M_g(n)$ the 
category of compact oriented $2$-dimensional surfaces  of genus $g$ with $n$ parametrized and labeled boundary components; morphisms are homeomorphisms preserving orientation, parametrizations and labelings.

Consider the modular operad $\TFT^d$ such that
$${\TFT}^d((g,n))=\begin{cases}\ground^{[M_g(n)]} &\text{if $d=0$, or $d=1$ and $g=0$,}\\ $0$ &\text{if $d=1$ and $g>0$.}\end{cases}$$

 The operadic compositions are induced by the glueing or self-glueing of surfaces at the boundaries. In case $d=1$ a surface of positive genus obtained as a result of glueing is regarded to be zero. This clearly makes $\TFT^d$ into a modular $\De$-operad.  Algebras over $\TFT^d$ are called (twisted) topological field theories.

Similarly define the modular $\De$-operad $\OTFT^d$ as follows. Consider a two-dimensional oriented compact surface  of genus $\gamma$ with $\nu$ boundary components, equipped with non-overlapping orientation-preserving embeddings of
$n$ parametrized and labeled intervals into its boundary. The
category of such surfaces, with morphisms being homeomorphisms preserving orientation, parametrizations and labelings, will be denoted by
$M_{\gamma,\nu}(n)$.
Set
$${\OTFT}^d((\gamma,\nu,n))=\colim_{G\in M_{\gamma,\nu}(n)}\De(H_1(G))$$
and
$${\OTFT}^d((g,n))=\bigoplus_{2(\gamma-1)+\nu=g-1,\enspace \nu>0}{\OTFT}^d((\gamma,\nu,n)).$$


 The operadic compositions on $\OTFT$ are induced by the glueing  or self-glueing of surfaces at the
 \emph{open boundaries}, i.e. at the embedded intervals. This makes $\OTFT^d$ into a modular $\De$-operad.  Algebras over $\TFT^d$ are called (twisted) open topological field theories.

\begin{rem}\label{OTFTorientation}
Let $G$ be a oriented surface with boundary. Then an ordering of the boundary components of $G$ specifies a canonical nonzero element of $\Det(H_1(G))$. To see this, consider the following exact Mayer-Vietoris sequence:
$$0\ra H_2(G_c)\ra H_1(\partial G) \ra H_1(G) \ra H_1(G_c)\ra 0,$$
where $G_c$ is the closed surface obtained from $G$ be glueing a disk into each boundary component. Taking the canonical generators of $\Det(H_2(G_c))=\Pi H_2(G_c)$ and of $\Det(H_1(G_c))$, we obtain an isomorphism
$\Det(H_1(G))\cong\Det(H_1(\partial G))$.

It follows that for
$G\in M_{\gamma,\nu}(n)$, the action of $\Aut(G)$ on  $\Det(H_1(G))$ is trivial if and only if the surface $G$ has at most one \emph{free boundary component}, i.e. boundary component not containing any open boundaries. Hence $\OTFT^1((\gamma,\nu,n))\cong\bigoplus\De(H_1(G))$ where the sum is over isomorphism classes of such surfaces.
\end{rem}

\begin{theorem}\label{tft}
There are isomorphisms of modular $\De$-operads:\begin{enumerate}
\item
$\Commod\cong \TFT^d$
\item
$\Assod\cong \OTFT^d.$
\end{enumerate}
\end{theorem}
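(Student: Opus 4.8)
The plan is to exploit the universal property of the $\De$-modular closure recorded above: since $\Cycod$ is by construction left adjoint to the genus-zero restriction functor, a morphism of modular $\De$-operads $\Commod\to\TFT^d$ is exactly the datum of a morphism of cyclic operads $\Comm\to\TFT^d(0,\bullet)$, and likewise $\Assod\to\OTFT^d$ corresponds to a map $\Ass\to\OTFT^d(0,\bullet)$. I would therefore first identify the genus-zero parts, promote the resulting maps to morphisms of modular $\De$-operads, and then establish that they are isomorphisms by a surjectivity-plus-injectivity argument.

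First I would compute the genus-zero parts. For $\TFT^d$ the only stable genus-zero surfaces are spheres with $n\geq 3$ labeled holes; each is unique up to orientation-preserving homeomorphism, so $\TFT^d((0,n))=\ground$ with trivial $\S_n$-action, and the pair of pants in $\TFT^d((0,3))$ is a single symmetric generator. Gluing pairs of pants produces every higher genus-zero surface, and the two decompositions of the four-holed sphere are homeomorphic, which is precisely the associativity relation; hence $\TFT^d(0,\bullet)\cong\Comm$ (note that the disk $H_1=0$ carries no $\De$-twist, so this holds for both $d=0,1$). For $\OTFT^d$ the constraint $2(\gamma-1)+\nu=-1$, $\nu>0$ forces the genus-zero part to come from disks ($\gamma=0,\nu=1$) carrying $n$ labeled boundary intervals; these are classified up to homeomorphism by the cyclic order of the intervals, reproducing $\Ass((n))=\ground[\S_n/\Z_n]$, with the two open pairs of pants giving the two planar products and the four-interval disk giving associativity, so $\OTFT^d(0,\bullet)\cong\Ass$. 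These identifications yield canonical morphisms $\Phi\colon\Commod\to\TFT^d$ and $\Psi\colon\Assod\to\OTFT^d$.

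Next I would prove surjectivity. Both $\TFT^d$ and $\OTFT^d$ are generated as modular $\De$-operads by their genus-zero parts: every compact oriented surface admits a pair-of-pants decomposition, and every oriented surface with embedded boundary intervals admits a decomposition into open pairs of pants (disks with three boundary intervals); in either case the surface is realized as an iterated operadic composition and self-gluing of the genus-zero generators. Since $\Phi$ and $\Psi$ are morphisms of modular operads whose images contain all generators, they are surjective.

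The main obstacle is injectivity, where the topological classification of surfaces enters. I would reformulate it as the statement that $\Phi$ and $\Psi$ are bijective on distinguished bases. On the source side, $\Commod((g,n))$ is spanned by trivalent genus-zero-vertex graphs decorated by the commutative generator, modulo associativity (and, for $d=1$, the $\De$-sign relations), while $\Assod((g,n))$ is spanned by the analogous trivalent \emph{ribbon} graphs; thickening such a graph yields a surface whose genus equals $\dim H_1$ of the graph, in accordance with \eqref{gen}, and whose boundary data are recorded by its legs, so $\Phi$ (resp.\ $\Psi$) sends the class of a graph to the class of its surface. The content is then that any two decorated graphs producing homeomorphic surfaces are already equal in the modular closure: this reduces to the fact that any two (open) pair-of-pants decompositions of a fixed surface are connected by elementary flips, and that each flip is an instance of the associativity relation combined with the modular self-gluing structure. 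Since $\TFT^d((g,n))=\ground^{[M_g(n)]}$ and $\OTFT^d$ is built from $\colim\De(H_1(G))$ by definition, matching the bases gives the isomorphisms. The delicate point, which I expect to absorb most of the work, is the $\De$-twist in the open case: one must check that the signs produced by reordering open boundaries agree on both sides, using the $\Aut(G)$-action on $\Det(H_1(G))$ pinned down in Remark~\ref{OTFTorientation} via the Mayer--Vietoris identification $\Det(H_1(G))\cong\Det(H_1(\partial G))$, and that in the closed case these same signs force $\TFT^1$, hence $\Commod$ for $d=1$, to vanish in positive genus.
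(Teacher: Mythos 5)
Your strategy is in essence the paper's: produce a morphism out of the modular closure, prove surjectivity by decomposing surfaces into genus-zero pieces, and prove injectivity by showing that graphs with homeomorphic thickenings are related by moves that hold in $\Commod$ resp.\ $\Assod$. Packaging the construction of the morphisms via the adjunction defining the $\De$-modular closure is a clean reformulation of what the paper does by hand (the paper defines the map on the free operad by the vertex-to-sphere, edge-to-tube thickening and checks it kills the associativity relation). The real differences are in the injectivity step. In the commutative case the paper avoids any Hatcher--Thurston-type input: since vertices of arbitrary valence are allowed and every non-loop edge may be contracted, \emph{every} graph is equivalent in $\Commod$ to the unique one-vertex graph with $g$ loops and $n$ legs, so $\Commod((g,n))$ is at most one-dimensional and injectivity is immediate; your restriction to trivalent graphs and elementary flips works but imports a substantially heavier theorem (and obliges you to treat the S-move, which is not an instance of associativity alone). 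In the associative case the two approaches are of comparable depth: your connectivity of open pair-of-pants decompositions by flips is essentially equivalent to the paper's citation of Igusa's theorem identifying the classifying space of the ribbon-graph category with moduli space, together with connectedness of that moduli space.

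One point in your proposal is genuinely wrong as stated: for the closed case with $d=1$ you write that the signs force $\TFT^1$, \emph{hence} $\Commod$, to vanish in positive genus. The vanishing of $\TFT^1((g,n))$ for $g>0$ is a \emph{definition} in the paper, and it cannot imply the vanishing of $\Commod((g,n))$; that vanishing is precisely what injectivity of your map $\Phi$ requires in positive genus, and it must be established on the graph side. The paper's argument is: any positive-genus graph is equivalent, by contracting non-loop edges, to a graph containing a loop, and the automorphism reversing a loop acts by $-1$ on $\Det(H_1(G))$, so the class of any such decorated graph dies in the colimit over $\Iso\Gamma((g,n))$. Without this (or an equivalent sign computation on your trivalent models, where self-gluings play the role of loops), part (1) for $d=1$ remains unproved in your outline; the analogous sign bookkeeping in the open case, which you correctly flag as the delicate point via the Mayer--Vietoris identification $\Det(H_1(G))\cong\Det(H_1(\partial G))$, is left equally open, but there you are no worse off than the paper's own terse treatment.
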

\begin{proof}
To prove (1) let $C$ be the underlying stable $\S$-module for the (non-unital) operad $\Comm$. In other words,
$C((g,n))=\begin{cases}\ground & \text{for $g=0, n\geq 3$}\\ $0$ & \text{for $g\neq 0$ or $n=0,1,2$}\end{cases}$.
The free modular $\De$-operad
$\M_{\De}C$
generated by $C$ is spanned by
isomorphism classes of $\De$-graphs with vertices of valence at least three.
The modular $\De$-operad $\Commod$ is the quotient of $\M_{\De}C$ by the associativity relation. This means that two graphs are considered
equivalent if one is obtained from the other by a sequence of expansions or contractions of edges that are not loops. To any such graph we associate a two-dimensional surface obtained by replacing each vertex by a sphere $S^2$ and each edge by a thin tube homeomorphic to the cylinder $[0,1]\times S^1$.
Contracting an edge clearly results in a homeomorphic surface and we thus obtain a map $f:\Commod\ra \TFT^d$ of modular $\De$-operads. Let $d=0$. The space $\TFT^d((g,n))$ is spanned by the elements corresponding to spheres with $g$ handles and $n$ closed boundary components. This corresponds to the graph with one vertex, $g$ loops and $n$ legs. Thus, the map $f$ is surjective. To see that it is injective let $G_1$ and $G_2$ be two graphs which give rise to homeomorphic surfaces. Let $g$ and $n$ be their genus and the number of boundary components respectively. Since $G_1$ and $G_2$ are equivalent to graphs having only one vertex it follows that these graphs should both have $g$ loops and $n$ legs and therefore they are isomorphic. This shows the injectivity of $f$. The case of an odd $d$ is even simpler since a graph $G$ having a loop admits an automorphism acting as multiplication by $-1$ on $\Det(H_1(G))$; thus such graphs do not contribute to $\Commod$. Similarly surfaces of positive genus do not contribute to $\TFT^d$ and so we are left with just spheres with boundaries. It follows that in this case $\Commod\cong\Commud\cong \TFT^d$.

The proof of (2) is similar. The free $\De$-modular operad generated by the stable $\S$-module underlying $\Ass$ is the space spanned by isomorphism classes of $\De$-twisted \emph{ribbon} graphs; the associativity relation allows one to contract or expand any edge that is not a loop. The ribbon structure allows one to replace each edge and leg by a thin strip which results in a surface with boundary. Note that the legs turn into parametrized intervals embedded into the boundary. As in the commutative case we obtain a map  $\Assod\ra \OTFT^d$ of modular $\De$-operads. The surjectivity of the map follows from the classification of topological surfaces with boundary. To see that it is injective it suffices to show that any homeomorphism of surfaces could be realized as a sequence of edge-contractions and expansions. It suffices to deal with the case $n=0$ when the graphs have no legs. Consider the category whose objects are ribbon graphs of genus $\gamma$ and with $\nu$ boundary components and whose morphisms are generated by isomorphisms and edge-contractions. It is known, cf. for example \cite{Igusa} that the classifying space of this category is homeomorphic to the moduli space of Riemann surfaces of genus $\gamma$ with $\nu$ boundary components. The desired statement then follows from the well-known fact that this moduli space is connected.
\end{proof}
\begin{rem}
Theorem \ref{tft} is a variation on the well-known theorem due to Atiyah et al. (see, e.g. \cite{Kock}) which states that a tensor functor into the category of vector spaces from the category of closed $1$-dimensional manifolds and cobordisms between them is equivalent to a Frobenius algebra. Our theorem is different in several respects. First of all, we treat algebras over modular operads rather than functors from the corresponding monoidal categories. Second, we also discuss \emph{open} topological field theories and show that they give rise to not necessarily commutative Frobenius algebras.
Finally, we include also the twisted versions of the corresponding theories which give rise to Frobenius algebras with an odd inner product. In this connection we mention the result of Costello \cite{Cos} that
open topological conformal field theories are in one-to-one correspondence with Calabi-Yau $A_\infty$-categories which could be viewed as a derived and categorified version of part (2) of Theorem \ref{tft}.
\end{rem}

\begin{rem}
As mentioned in the proof of part (1) of Theorem \ref{tft}, for $d=1$ there is an isomorphism $\Commod\cong\Commud$.
Therefore the Feynman amplitude of a commutative Frobenius algebra with an odd inner product on a stable graph of positive genus is zero. In the case $d=0$ the vanishing of such amplitudes is a rather strong condition on a Frobenius algebra.
\end{rem}
Next we introduce a modular $\De$-operad lying between $\Assud$ and $\Assod$.
\begin{defi}
The modular $\De$-operad $\KAssd$ is the quotient of $\Assod$ by the ideal generated by all surfaces with at least one free boundary component.
\end{defi}
We will see later on that the operad $\KAssd$ appears in the construction of a certain compactification of decorated Riemann surfaces due to Kontsevich who used it in his proof of the Witten conjecture \cite{Kon}.

\begin{rem}\label{crapsupercrap}Representing surfaces as ribbon graphs we see that $\KAssd$ is obtained from $\Assod$ by imposing the relation
\begin{equation}\label{rel1}\xymatrix{&*=0{\bullet}\ar@{-}[r]\ar@{-}@(ul, dl)\ar@{-}@(dl,ul)[]& }=0.\end{equation}
Similarly $\Assud$ is obtained from $\Assod$ by imposing both relation~(\ref{rel1}) and the further relation
\begin{equation}\label{rel2}\xymatrix{&*=0{\bullet}\ar@{-}[l]\ar@{-}[r]\ar@{-}@(ul, dl)\ar@{-}@(dl,ul)[]& }=0.\end{equation}
The left hand side of (\ref{rel2}) is the `double twist' operator appearing in the Cardy condition in Moore and Segal's formulation of open-closed topological field theory \cite{MooreSegal}, \cite{laz}, \cite{nata}.
These relations translate into certain identities which should hold for the algebras over $\KAssd$ and~$\Assud$. For example, let $A$ be an algebra over $\Assod$ and $\sum_i a_i\otimes b_i\in A\otimes A$ be the inverse to the invariant form on $A$. Then equation (\ref{rel1}) implies that in order that $A$ be an algebra over $\KAssd$ we should have $\sum_ia_ib_i=0$. Similarly we see from equation (\ref{rel2}) that $A$ further lifts to an algebra over $\Assud$ if an only if the following additional condition holds: for any $x\in A$ we have $\sum_i(-1)^{|a_i||x|}a_ixb_i=0$.

\end{rem}

\begin{rem}\label{operadzoo}  While $\KAssd$ appears to be the most important quotient of $\Assod$, we list some others that might be of interest:
\begin{itemize}
\item $\RAssd$: the quotient of $\Assod$ by the ideal generated by all surfaces of positive genus, equivalently modulo the relation
\begin{equation}\label{rel3}\xymatrix{&*=0{\bullet}\ar@{-}[r]\ar@{-}@(u, l)\ar@{-}@(ul,dl)[]& }=0.\end{equation}
\item $\KRAssd$: the quotient of $\Assod$ by the ideal generated by all surfaces with at least one free boundary component and
 all surfaces of positive genus, i.e. modulo relations (\ref{rel1})
 and (\ref{rel3}).
 \item $\SAssd$: the quotient of $\Assod$ by the relation~ (\ref{rel2}).
 \end{itemize}
 We have the following commutative diagram of modular $\De$-operads; each arrow is a quotient map:
 \[\xymatrix{&\Assod\ar[dl]\ar[dr]\\ \KAssd\ar[dr]&&
 \RAssd\ar[dl]\ar[dr]\\&
 \KRAssd\ar[dr]&&
 \SAssd\ar[dl] \\ &&\Assud}\]
 We remark that $\SAssd$ is rather close to $\Assud$; indeed we have
$\SAssd((g,0))\cong\Assud((g,0))$ for all $g>0$.

The surface corresponding to the ribbon graph appearing in relation (\ref{rel3}) is a torus with one boundary component containing one open boundary. The relation
 translates, in the notation of Remark~\ref{crapsupercrap}, to the condition $\sum_{i,j}(-1)^{|a_j||b_i|}
 a_ia_jb_ib_j=0$ for an $\Assod$-algebra to lift to a $\RAssd$-algebra.  It is unclear whether this equation has any significance from an algebraic point of view.
\end{rem}

\section{Algebras over the dual Feynman transform
}
In this short section we give a general formulation of Kontsevich's dual construction for algebras over a modular $\De$-operad.
Before stating the result, we extend Definition~\ref{modularalgebra} in an obvious way.
Let $\emo$ be an extended modular $\De$-operad. Given a dg vector space $V$ equipped with an inner product of degree $d$, the operad ${\tilde{\mathcal{E}}}$ constructed in Section~\ref{twistedalgebras} may be regarded as an extended
modular $\De$-operad.
An $\emo$-algebra structure on $V$ is defined to be a homomorphism
$\emo\ra\tilde{\mathcal{E}}$ of extended modular $\De$-operads. If $\emo$ is equipped with a unit $\mathbf{1}\in\emo((0,2))$, we require $\mathbf{1}$ to be mapped to the identity operator in
$\tilde{\mathcal{E}}((0,2))\cong\Hom(V,V)$.

\begin{theorem}\label{dualalg}
Let $\O$ be a modular $\De$-operad and $V$ be dg vector space with an inner product $\langle,\rangle$ of degree $d$. Then the structure of an $\FvDeo$-algebra on $V$ is equivalent to the following data:
\begin{enumerate}\item
The structure of an $\O$-algebra on $V$.\item
An odd operator $s:V\ra V$ such that
\begin{itemize}\item
$s^2=0$;\item
$(ds+sd)(a)=a$ for any $a\in V$;\item
$\langle s(a),b\rangle=(-1)^{|a|}\langle a,s(b)\rangle$.
\end{itemize}
\end{enumerate}
\end{theorem}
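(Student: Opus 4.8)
The plan is to read off everything from the generators-and-relations presentation of $\FvDeo$. By construction, $\FvDeo$ is obtained from the modular $\De$-operad $\O$ by freely adjoining the single operation $s\in\EuScript{S}((0,2))$ (so that $\O[s]$ is free over $\O$ on $s$), then adjoining a unit $\mathbf 1$ and imposing the relation $s\circ_1 s=0$, with the differential extended by $d(s)=\mathbf 1$. Hence, by the universal properties of the coproduct, the quotient, and the unit-adjunction in the category of extended modular $\De$-operads, a homomorphism $f\colon\FvDeo\to\tilde{\mathcal E}$ is exactly the data of (i) a homomorphism $\O\to\tilde{\mathcal E}$, together with (ii) the image $f(s)\in\tilde{\mathcal E}((0,2))$ of the generator, subject to the constraints that $f(s)$ be $\S_2$-invariant (because $\EuScript{S}((0,2))=\Pi^l\ground$ carries the trivial $\S_2$-action), that $f(s)\circ_1 f(s)=0$ (the relation $s^2=0$), and that $f$ respect the differential, i.e. $d(f(s))=f(\mathbf 1)$; and with the further requirement, forced on any morphism of unital extended operads, that $f(\mathbf 1)=\operatorname{id}_V$. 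Since the restriction $\O\to\tilde{\mathcal E}$ is precisely an $\O$-algebra structure on $V$, the whole content of the theorem is the translation of the constraints on $f(s)$ into the three listed bullets.

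First I would fix the identification $\tilde{\mathcal E}((0,2))\cong\Hom(V,V)$ coming from the inner product via $V\cong\Pi^d(V^*)$; under it, $f(s)$ becomes an operator $s\colon V\to V$, and since $s$ has operadic degree $1$ this operator is odd. Under this identification the composition $\circ_1$ on $\tilde{\mathcal E}((0,2))$ is ordinary composition of operators, so the relation $f(s)\circ_1 f(s)=0$ reads $s^2=0$, which is the first bullet. The differential on $\Hom(V,V)$ is the (graded) commutator with $d_V$, which on the odd operator $s$ is $d_V s+s d_V=ds+sd$; combined with $f(\mathbf 1)=\operatorname{id}_V$, the differential compatibility $d(f(s))=f(\mathbf 1)$ becomes $(ds+sd)(a)=a$ for all $a\in V$, the second bullet.

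Next I would unwind the $\S_2$-invariance. The $\S_2$-action on $\tilde{\mathcal E}((0,2))$ transports, under the isomorphism with $\Hom(V,V)$, to the graded adjoint with respect to $\langle,\rangle$, so an element is $\S_2$-invariant exactly when the corresponding operator is graded self-adjoint; written out for the odd $s$ this is $\langle s(a),b\rangle=(-1)^{|a|}\langle a,s(b)\rangle$, the third bullet. Conversely, any odd operator $s\colon V\to V$ satisfying the three bullets provides an $\S_2$-invariant, square-zero, differential-compatible image for the generator, and hence, together with a chosen $\O$-algebra structure, assembles via the same universal properties into a well-defined homomorphism $\FvDeo\to\tilde{\mathcal E}$; this produces the inverse correspondence and establishes the stated equivalence.

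I expect the main obstacle to be the careful bookkeeping of Koszul signs in the last step: one must verify that the transpose of the $\S_2$-swap, pushed through the parity shifts defining $\tilde{\mathcal E}((0,2))\cong\Pi^d(\Pi^d V)^{\otimes 2}$ and through the inner-product identification $V\cong\Pi^d(V^*)$, produces \emph{precisely} the sign $(-1)^{|a|}$ in the self-adjointness relation rather than a skew-adjointness relation. The remaining verifications, that $\circ_1$ is operator composition and that the induced differential on $\Hom(V,V)$ is the commutator with $d_V$, are routine once the sign conventions for $\tilde{\mathcal E}$ are fixed, and the reduction to data (i)–(ii) is immediate from the presentation recorded in Lemma \ref{key}.
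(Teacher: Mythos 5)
Your proposal is correct and follows essentially the same route as the paper's own proof: both read the statement off the generators-and-relations presentation of $\FvDeo$, translating the generator $s$ into an odd square-zero operator, the relation $d(s)=\mathbf 1$ into the contracting-homotopy identity via $f(\mathbf 1)=\operatorname{id}_V$, and the triviality of the $\S_2$-action on $s$ into graded self-adjointness with respect to $\langle,\rangle$. The paper states these translations more tersely, while you make explicit the underlying universal properties and the sign bookkeeping, but the argument is the same.
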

In other words, an $\FvDeo$-algebra is an $\O$-algebra together with a contracting homotopy having square zero and compatible with the given inner product.
\begin{proof}
By definition, the extended modular $\De$-operad $\FvDeo$ is generated by $\O$ and an odd element $s\in \FvDeo((0,2))$ of square zero; this translates into the existence of an odd square-zero operator on $V$ which we denote by the same symbol $s$. The compatibility condition between $s$ and $d$ is equivalent to the identity $d(s)=\mathbf{1}$ in $\FvDeo$. Finally, since the action of $\Z_2$ is trivial on $s\in \FvDeo$ the corresponding operator $s\in\Hom(V,V)$ is likewise $\Z_2$-invariant which is equivalent to the stated compatibility condition between $s$ and the inner product $\langle,\rangle$.
\end{proof}
In particular, taking Feynman amplitudes of $V$ on $\De$-twisted $\O$-decorated graphs without legs (the partition function of $V$)  we obtain a chain map
\[\FvDeo((\bullet,0))\cong \FDeo((\bullet,0))^*\longrightarrow\Pi^d\ground\]
or, in other words, a cycle in the dg vector space $\Pi^d\FDeo((\bullet,0))$.

\begin{rem}

Let $V$ be an $\FvDeo$-algebra, and let $s:V\ra V$ be the contracting homotopy given by Theorem~\ref{dualalg}. Then it is easy to see that $\im s$ is an isotropic complement to the isotropic subspace $\Ker d=\im d$ in $V$.

Conversely let $V$ be a contractible $\O$-algebra, and let $U'$ be an arbitrary isotropic complement to $U:=\Ker d=\im d$ in $V$. Then $d$ restricts to an isomorphism of $U'$ onto $U$, and we define $s:V\ra V$ to be inverse to $d$ on $U$ and zero on $U'$. It is straightforward to check that $s$ satisfies the required conditions of Theorem~\ref{dualalg}, making $V$ an $\FvDeo$-algebra.

From this characterization it follows that \emph{any} contractible $\O$-algebra can be given the structure of an $\FvDeo$-algebra.

\end{rem}



\section{Stable graph complexes} \label{stablegraphs}

We have seen that the dual Feynman transform of a modular $\D$-operad $\O$ may be viewed as a space of twisted $\O$-decorated stable graphs. In this section we focus our attention on the modular
$\De$-operads directly related to $\Comm$ and $\Ass$ that were defined in the previous section, namely $\O=\Commu,\Commo,\Assu,\KAss,\Asso$ and their twisted versions. In these cases we shall use Theorem \ref{tft} to interpret twisted $\O$-decorated stable graphs in purely combinatorial terms, as `oriented' graphs with additional combinatorial structure on vertices.

Thus we define certain \emph{graph complexes} associated to these modular $\De$-operads, which serve as explicit models for their (dual) Feynman transforms. The associated cell complexes were introduced and studied in \cite{Kon}, \cite{Loo}, \cite{Zv} and \cite{Mon}; however our explicit description of the corresponding chain complexes is new.


Recall that we are considering (twisted) modular operads in dg vector spaces. Their Feynman transforms are therefore also (twisted) modular operads in dg vector spaces. Suppose however that $\O$ is a modular $\De$-operad in ($\Z/2$-graded) vector spaces, i.e. with vanishing differential. Then $\FDo$ is a cochain complex with respect to the following $\Z$-grading:
\begin{defi}
Let $\O$ be a modular $\De$-operad with vanishing differential.  Then for $i=0,1,2,\ldots$ the $i$th  grading component of the Feynman transform of $\O$ is
\[\F^i_{\De}\O:=\bigoplus_{G}\left[\mathfrak{K}(G)\otimes\De(G)\otimes\O((G))^*\right]_{\Aut G}\] where the direct sum is extended over the isomorphism classes of stable graphs with $i$ edges.
\end{defi}
\begin{rem}
It is clear that $\FDeo$ actually has a $\Z\times\Z/2$-grading and the $\Z$-grading defined above is obtained by simply forgetting the $\Z/2$-grading.
\end{rem}

\subsection{Commutative case} We start with the simplest case of commutative graphs. A \emph{stable commutative graph} is essentially what Getzler and Kapranov called a \emph{stable graph} except we will not consider graphs with legs. Furthermore, whenever a vertex has genus $g$ we will attach to it $g$ dotted loops. In other words, we represent a commutative stable graph as a usual graph having no vertices of valence $0$, $1$ or $2$, such that some of its loops are dotted as in the following picture.\\ \\
\[\xymatrix{&*=0{\bullet}\ar@{-}\ar@{-}[r]\@{-}@(ul, dl)\ar@{-}@(dl,ul)[]\ar@{-}@/_-2pc/[rr]&*=0{\bullet}\ar@{-}[l]
\ar@{-}[r]\@{-}@(ur, dl)\ar@{-}@(ur,ul)[]\ar@{.}@(dr,dl)[]&*=0{\bullet}\ar@{-}@/^2pc/[ll]\ar@{.}@(dr,ur)[]& }\] \\ \\
Whenever we say simply `edge' or `loop' without an adjective we shall mean `solid edge' and `solid loop'. The set $\Edge(G)$ will refer to the set of edges but $H_1(G)$ will refer to the first homology group of $G$ \emph{taking into account the dotted edges}.

We define a $0$-orientation on a stable commutative graph $G$ to be an ordering of the edges of $G$ modulo even permutations. Similarly we define a $1$-orientation to be an ordering of the vertices together with an orientation of \emph{every} edge of $G$, again modulo even permutations.
A $d$-orientation on $G$ determines a non-zero element
of $\Det(\Edge(G))\otimes\Det^d(H_1(G))$, see, e.g. \cite{CV}; reversing the orientation means negating the corresponding element.


We will now introduce chain complexes $\G_\bullet^{\Commod}$ for $d=0,1$. The underlying vector space for $\G_\bullet^{\Commod}$ is spanned by the isomorphism classes of $d$-oriented stable commutative graphs, modulo the relation that oppositely oriented graphs sum to zero.

The differential is defined as follows. Let $G$ be a stable commutative graph with a $d$-orientation and $e$ be an edge of $G$. If $e$ is not a loop we denote by $G_e$ the graph resulting from the contraction of $e$.  If $e$ is a loop then $G_e$ is obtained from $G$ by replacing $e$ with a dotted loop. In any case $G_e$ inherits a $d$-orientation from $G$ and we define the differential by
\begin{equation}\label{dif}d(G)=\sum_{e\in\Edge(G)}G_e.\end{equation}
We denote by $\G_\bullet^{\Commod}(g)$ the subcomplex of
$\G_\bullet^{\Commod}$
 spanned by $d$-oriented stable graphs of genus $g$; it is clear that these subcomplexes form a direct sum. The complex $\G_\bullet^{\Commod}(g)$ will be called the \emph{stable graph complex} for $d=0$ and \emph{twisted stable graph complex} for $d=1$.
The corresponding homology will be called \emph{stable graph homology} and \emph{stable twisted graph homology} respectively; it will be denoted by $H_\bullet^{\Commod}(g)$.
\begin{rem}
Our terminology `graph complexes' and `twisted graph complexes' is in agreement with that of Getzler and Kapranov since these notions correspond to the dual Feynman transform of a modular operad and that of a twisted modular operad respectively; see Proposition \ref{stablecom} below.
However since the Feynman transform of a modular $\De$-operad is (after an appropriate transfer of structure) a modular $\Det^{1-d}$-operad, it might also be reasonable to interchange the labels `twisted' and `untwisted' -- this alternative convention is adopted in \cite{kontfeynman} and \cite{kontsg}. The same remark applies to the various versions of ribbon graph complexes defined below.
\end{rem}

The spaces spanned by graphs having at least one dotted loop form subcomplexes in $\G_\bullet^{\Commod}(g)$; quotienting out by these subcomplexes we obtain the complex of (commutative) graphs $\G_\bullet^{\Commud}(g)$.  This corresponds to the fact that the (twisted) modular  operad $\Commud$ is a quotient of $\Commod$.

It will be convenient for us to use the corresponding cochain complexes. For $\O=\underline{\Comm}^d$ or $\O=\Commod$ we denote by $\G^\bullet_{\O}(g)$ the cochain complex which is $\ground$-dual to $\G_\bullet^{\O}(g)$ so that
$\G^i_{\O}(g)=\Hom(\G_i^{\O}(g),\ground)$. The corresponding cohomology will be denoted by $H^i_{\O}(g)$.

The following proposition compares the stable commutative graph complexes with the Feynman transforms of the corresponding $\De$-modular operads
\begin{prop}\label{stablecom}
For $\O=\underline{\Comm}^d$ or $\O=\Commod$ there is an isomorphism of complexes
\[\F^\bullet_{\De}\O((g,0))\cong\G^\bullet_{\O}(g).\]
\end{prop}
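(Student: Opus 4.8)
The plan is to prove the isomorphism degreewise and then check that it intertwines the differentials, reducing everything to the computation of Theorem~\ref{tft}, which identifies $\Commod$ with $\TFT^d$ and hence makes all decoration spaces one-dimensional with operadic structure maps given by the canonical gluing isomorphisms. Throughout I would work with the $\Z$-grading by number of edges (forgetting the $\Z/2$-grading, as in the preceding remark), so that all signs are carried by the orientation data.

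First I would analyze the underlying graded vector space. Fix $g$ and the edge number $i$. By Theorem~\ref{tft} each factor $\O((g(v),\Flag(v)))$ with $\O=\Commod$ is canonically $\ground$, concentrated in even degree and carrying the trivial $\S_{n(v)}$-action; hence $\O((G))=\bigotimes_{v}\O((g(v),\Flag(v)))\cong\ground$ with trivial $\Aut(G)$-action and no Koszul signs. Substituting into the definition of $\F^i_{\De}\O$ collapses it to $\bigoplus_{[G]}\left[\mathfrak{K}(G)\otimes\De(G)\right]_{\Aut G}$, the sum running over isomorphism classes of stable graphs of genus $g$ with $i$ edges and no legs. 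I would then invoke the dotted-loop dictionary of the definition of a stable commutative graph: a vertex of genus $g(v)$ becomes a vertex carrying $g(v)$ dotted loops, so that stability of each vertex becomes exactly the condition of having no vertices of valence $0,1,2$, and the identity $\dim H_1(G)+\sum_v g(v)=g$ shows that $\De(G)=\Det^d(H_1(G))$ is computed from $H_1$ taken with the dotted loops included, matching the convention used in the graph complex.

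Next I would identify $\left[\mathfrak{K}(G)\otimes\De(G)\right]_{\Aut G}$ with the span of $d$-oriented copies of $G$. By definition a $d$-orientation is precisely a generator of $\mathfrak{K}(G)\otimes\De(G)=\Det(\Edge(G))\otimes\Det^d(H_1(G))$ up to sign, and an automorphism of $G$ acts on this line by $\pm1$. If some automorphism acts by $-1$ the coinvariants vanish, matching the vanishing in $\G_\bullet^{\Commod}(g)$ of a $d$-oriented graph with an orientation-reversing automorphism; otherwise both sides are one-dimensional. Using the canonical self-duality of the span of oriented graphs (the oriented graphs form a basis up to sign) to identify $\G_\bullet^{\Commod}(g)$ with its dual, this yields an isomorphism of graded vector spaces $\F^\bullet_{\De}\Commod((g,0))\cong\G^\bullet_{\Commod}(g)$. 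For $\O=\Commud$ the only change is that $\Commud((g(v),n(v)))$ is nonzero only for $g(v)=0$, so exactly the graphs carrying a dotted loop drop out; this is precisely the quotient defining $\G^\bullet_{\Commud}(g)$, and the argument applies verbatim.

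Finally I would check that the isomorphism is a map of complexes. Since $\O$ has vanishing internal differential, the Feynman differential is edge expansion composed with the transpose of the operadic structure maps of $\O$, and by construction this is dual to the operation which, on a graph with one more edge, either contracts a non-loop edge (operadic composition $\circ$) or contracts a solid loop with a resulting increment of vertex genus (self-gluing $\xi$). Under the dotted-loop dictionary these are exactly the two terms of the graph differential~(\ref{dif}); note that for $d=1$ the map $\xi$ vanishes, consistently with the absence of dotted loops there. Hence the Feynman differential equals the transpose of~(\ref{dif}), i.e.\ the differential of $\G^\bullet$. The main obstacle is purely a matter of signs and orientations: one must verify that the induced $d$-orientation on $G_e$ prescribed by the graph complex coincides with the sign produced by wedging in the expanded edge together with the $\Det^d(H_1)$ twist when $d=1$, and in particular that the loop-contraction term (which alters $\dim H_1$ of the solid graph but not of the combined graph) carries the correct sign. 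This bookkeeping follows the conventions of \cite{CV}; once it is in place, the identification of spaces and the matching of the two types of differential terms are formal.
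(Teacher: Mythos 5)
Your plan coincides with the approach of the paper, whose proof of Proposition~\ref{stablecom} is simply ``direct inspection of definitions,'' and your inspection is carried out correctly for $d=0$ and for $\O=\Commud$: there the decorations $\O((G))$ are canonically trivial with trivial $\Aut(G)$-action, each summand of $\F^i_{\De}\O((g,0))$ collapses to the line $\left[\mathfrak{K}(G)\otimes\De(G)\right]_{\Aut G}$, which is the span of a $d$-orientation of $G$ (or zero when an automorphism reverses orientation), and the two terms of the graph differential~(\ref{dif}) are dual to edge expansion followed by $\circ$ and by $\xi$ respectively.

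For the twisted closure $\Commoo$ (the case $d=1$), however, two of your intermediate claims are false, and your argument lands on the right answer only because the two errors compensate. First, it is not true that every factor $\Commoo((g(v),\Flag(v)))$ is canonically $\ground$: by Theorem~\ref{tft} we have $\Commoo\cong\TFT^1$, which vanishes whenever $g(v)>0$. Second, in the Feynman transform the twist $\De(G)$ is $\Det^d$ of $H_1$ of the underlying stable graph; it is \emph{not} ``computed from $H_1$ taken with the dotted loops included'' --- the genus formula you cite does not say this, and dotted loops are a device belonging only to the graph-complex side. A concrete test case: let $G\in\Gamma((2,0))$ consist of two vertices of genus $1$ joined by a single edge. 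Your collapsed formula gives $\left[\mathfrak{K}(G)\otimes\Det^1(H_1(G))\right]_{\Aut G}\cong\Pi\ground\neq 0$, because $H_1(G)=0$ and the vertex swap acts trivially; but the corresponding summand of $\F^1_{\Det^1}\Commoo((2,0))$ is zero, since $\Commoo((G))=\Commoo((1,1))^{\otimes 2}=0$, and the class of the corresponding dotted graph in $\G_1^{\Commoo}(2)$ is also zero, since reversing a dotted loop is an automorphism reversing the $1$-orientation. So in the twisted case the correct matching is: vanishing of decorations at positive-genus vertices on the operadic side corresponds to killing by dotted-loop-reversing automorphisms on the graph side --- two different mechanisms, which your write-up conflates by misstating the definition of $\De(G)$. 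The repair is short: for $d=1$ observe, using Theorem~\ref{tft}, that only graphs all of whose vertices have genus zero contribute to $\F^\bullet_{\De}\O((g,0))$, note that exactly the same graphs survive in $\G_\bullet^{\Commoo}(g)$ (solid loops being killed by the same automorphism argument on both sides), and then run your comparison of orientation lines, self-duality and differentials on those graphs.
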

\begin{proof}
Direct inspection of definitions.
\end{proof}
\subsection{Associative case} In the associative case we have more versions of the graph complex as in addition to the modular operads  $\Assud$ and $\Assod$ we have the intermediate operad $\KAssd$. Again, for the sake of simplicity of exposition we will only treat graphs without legs.

\begin{defi}A \emph{prestable ribbon graph} (or simply a \emph{prestable graph}) is a connected graph such that the set of half-edges around each vertex is partitioned into cyclically ordered subsets; additionally a pair of non-negative integers $(\gd,\bd)$ is assigned to each vertex. The integer $\gd$ is called the \emph{genus defect} and the integer $\bd$ is called the \emph{boundary defect} of the corresponding vertex. We do not allow both the genus defect and boundary defect to be zero at a $1$-valent vertex nor at a bivalent vertex whose two half-edges form a cycle.
\end{defi}
It is convenient to represent a vertex $v$ as a `ghost' surface $S_v$ having genus $\gd$ and $\bd$ free boundary components. Let $\Flag(v)=\coprod_{i\in I}\Flag_i$ be the given partition of $\Flag(v)$ into cyclically ordered subsets; then the remaining (non-free) boundary components of $S_v$ are labeled by $I$; moreover the open boundary intervals embedded into the $i$th boundary component are in one-to-one correspondence with the set $\Flag_i$ in a way compatible with the given cyclic ordering of $\Flag_i$.


\begin{figure}[h]
\[
\includegraphics[height=1.8in]{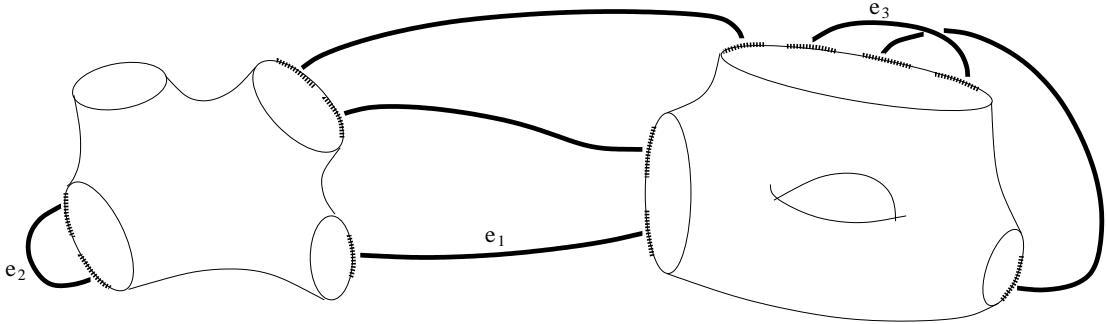}
\]
\caption{A prestable graph.\label{prestablegraph}}
\end{figure}
\FloatBarrier

This description is similar to that of a stable commutative graph complex; the ghost surfaces being analogous to the dotted loops. The notion of a prestable graph is nothing but a stable graph decorated by the operad $\OTFT$. The definition of a $d$-orientation is similar to the stable commutative case.
A $0$-orientation is again an ordering of edges, and a $1$-orientation is an ordering of vertices, an orientation of each edge \emph{and} for each vertex $v$, an ordering of the blocks $\Flag_i$ of the given partition of $\Flag(v)$.
As before a $d$-orientation is defined up to even permutation, so each prestable graph has two $d$-orientations, opposite to each other. A $d$-orientation on a prestable graph $G$ determines a nonzero element of
$\mathfrak{K}(G)\otimes\Det^d(G)\otimes\OTFT^d((G))$; see Remark~\ref{OTFTorientation}.

Just like a usual ribbon graph, a prestable graph $G$ has a set of \emph{boundary components}. These are defined as follows. Thicken the edges of $G$ to form ribbons connecting the ghost surfaces attached to the vertices. The boundary components of the resulting connected surface $S(G)$ will be called the boundary components of $G$; clearly their number is an isomorphism invariant of $G$. This generalizes the notion of a boundary component of a usual ribbon graph. Note that we can meaningfully speak of a sequence of edges of a prestable graph forming a boundary component. In particular, an edge (not necessarily a loop)  could itself form a boundary component.


\begin{figure}[h]
\[
\includegraphics[height=1.7in]{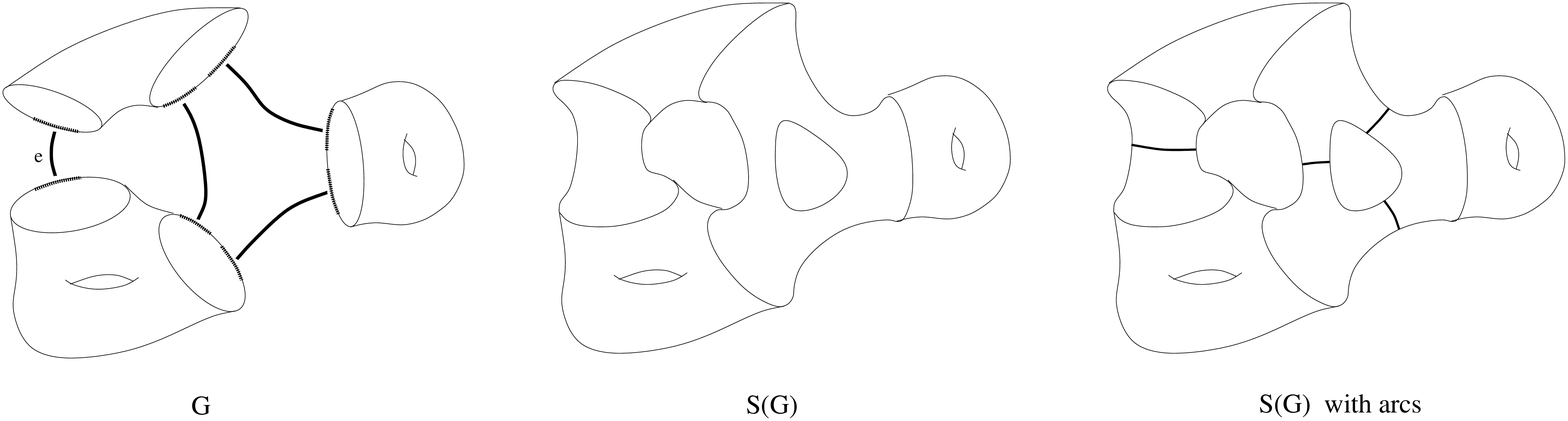}
\]
\caption{A prestable ribbon graph $G$ and its associated surface $S(G)$. Note that the edge $e$ itself forms a boundary component. \label{edgeisboundary}}
\end{figure}
\FloatBarrier

Further, the \emph{genus} of a prestable graph is defined as the genus of the surface $S(G)$.
We will now introduce complexes $\G_\bullet^{\Assod}(\gamma,\nu)$ for $d=0,1$. They will be called \emph{prestable ribbon} graph complex for $d=0$ and \emph{twisted prestable} graph complex for $d=1$. The underlying vector space for $\G_\bullet^{\Assod}(\gamma,\nu)$ is spanned by the isomorphism classes of $d$-oriented prestable graphs of genus $\gamma$ with $\nu$ boundary components; the grading is chosen in such a way that $\G_n$ corresponds to the graphs with $n$ edges. As in the commutative case, we identify an oriented graph with the negative of the oppositely oriented graph.
We will sometimes omit indicating explicitly the dependence of the complexes under considerations on the numbers $\gamma$ and $\nu$.

Let $G$ be a prestable graph and let $e$ be an edge connecting two vertices $v_1$ and $v_2$ (which could coincide). We will regard $e$ as a thin ribbon joining the surfaces $S_{v_1}$ and $S_{v_2}$; there results a new surface $S_{v_1v_2}$ which is the union of $S_{v_1}, S_{v_2}$ and $e$. Consider the new prestable ribbon graph $G_e$ obtained from $G$ by coalescing the vertices $v_1$ and $v_2$ into a new vertex decorated by $S_{v_1v_2}$. Then the differential in $\G_\bullet^{\Assod}$ is defined by the same formula as in the stable commutative case (\ref{dif}).

\begin{figure}[h]
\[
\includegraphics[height=1.8in]{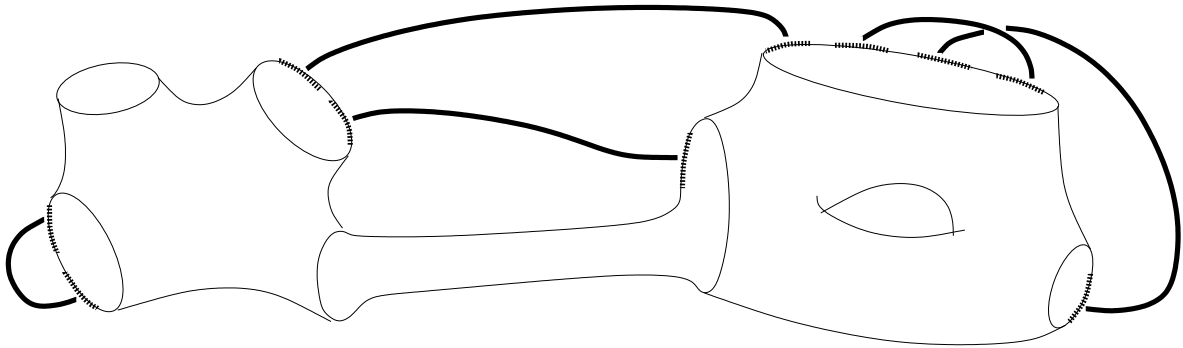}
\]
\caption{Graph obtained by `contracting' the edge $e$ in Figure~\ref{prestablegraph}.}
\end{figure}
\FloatBarrier

\begin{rem}

The main thrust of this section has been to give a purely combinatorial description, in terms of graphs, of certain (dual) Feynman transforms. Temporarily taking the opposite approach, replacing graphs by surfaces, we are led to arc systems and the point of view of Penner \cite{penner}.

We saw above that a prestable graph $G$ determines a surface $S(G)$ formed by replacing vertices by ghost surfaces and thickening edges into strips. Of course different prestable graphs may give rise to the same surface; one way to obtain a faithful representation is to endow $S(G)$ with a system of arcs, one for each thickened edge, connecting opposite boundaries. See Figure~\ref{edgeisboundary}.

This recipe gives rise to an equivalence between the category of prestable graphs and the category $\ArcCat(0)$ whose objects are
 compact oriented surfaces $G$ with boundary, equipped with a system $\Arc(G)$ of non-overlapping embedded intervals with endpoints in the boundary of $G$; each arc in $\Arc(G)$ is required to be non-isotopic (relative to the boundary) to any other arc in $\Arc(G)$, or to a boundary arc. Morphisms in $\ArcCat(0)$ are generated by homeomorphisms preserving orientation and arc systems, and by `arc deletions'.
We then have an isomorphism of chain complexes
$$\dft{\De}{\OTFT}^d((0))\cong\colim_{G\in\Iso\ArcCat(0)}\Det(\Arc(G))\otimes\De(H_1(S(G))),$$
where the differential on the right hand side is given by arc deletion.
 To reconstruct all of $\dft{\De}{\OTFT^d}$, including the operadic structure, we should use categories $\ArcCat(n)$ of arc systems in surfaces with $n$ open boundaries.
\end{rem}

We will now introduce two other types of associative graphs.
\begin{defi}
Let $G$ be a prestable graph. It is called a \emph{stable ribbon graph} if every vertex of $G$ has a zero boundary defect.
If every vertex of $G$ has zero genus defect, zero boundary defect and trivial partition on its half-edges, then  $G$ is called a \emph{ribbon graph}.
\end{defi}
\begin{rem} The term `ribbon graph' just introduced is indeed consistent with the common usage.
Indeed if each vertex of a prestable graph $G$ has zero genus defect, zero boundary defect and trivial partition on adjacent half-edges, then all its ghost surfaces are disks. Clearly these ghost surfaces could simply  be ignored, making $G$ a usual ribbon graph.
\end{rem}

Further we define two other associative graph complexes ${\G}_\bullet^{\KAss}(\gamma,\nu)$
and ${\G}_\bullet^{\Assu}(\gamma,\nu)$,
and their twisted versions ${\G}_\bullet^{\KAsso}(\gamma,\nu)$ 
and ${\G}_\bullet^{\Assuo}(\gamma,\nu)$; we will call these \emph{stable ribbon} graph complexes
 and \emph{ribbon} graph complexes respectively. The definitions of the underlying vector spaces are the same as in the prestable graph complexes. The differential is likewise defined just as in the prestable case, except if an operation of collapsing an edge produces a graph of a prohibited type then this operation is set to be zero. For example, the  prestable graph pictured in Figure~\ref{prestablegraph} is stable, but contraction of the loop $e_2$ yields a vertex with boundary defect $1$ and therefore does not contribute to the differential in ${\G}_\bullet^{\KAss}$. On the other hand, contracting the loop $e_3$ does yield another stable ribbon graph.

We have the following hierarchy of modular operads; both arrows are quotient maps:
$$\Assod\longrightarrow\KAssd\longrightarrow\Assud$$
This gives rise to quotient maps between the corresponding graph complexes:
$$\G^{\Assod}_\bullet(\gamma,\nu)\longrightarrow
\G^{\KAssd}_\bullet(\gamma,\nu)\longrightarrow
\G^{\Assud}_\bullet(\gamma,\nu).$$

We find it convenient to use the corresponding cochain complexes $\G^\bullet_\O(\gamma,\nu):=\Hom(\G_\bullet^\O(\gamma,\nu),\ground)$ and their cohomology $H^\bullet_\O(\gamma,\nu)$ where $\O$ is $\Assod$, $\KAssd$ 
or $\Assud$.   Finally we have the following analogue of Proposition \ref{stablecom} whose proof consists of simply unraveling the definitions.
\begin{prop}
Let $\O$ be $\Assod$, $\KAssd$
 or $\Assud$.
Then there is an isomorphism of complexes
\[\F^\bullet_{\De}\O((g,0))\cong
\bigoplus_{2(\gamma-1)+\nu=g-1,\enspace \nu>0} \G^\bullet_{\O}(\gamma,\nu)\]
\end{prop}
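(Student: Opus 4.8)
The plan is to establish the isomorphism $\F^\bullet_{\De}\O((g,0))\cong\bigoplus \G^\bullet_{\O}(\gamma,\nu)$ by direct comparison of definitions, in exactly the spirit of the proof of Proposition~\ref{stablecom}, but now keeping careful track of the extra combinatorial bookkeeping (cyclic orderings, genus and boundary defects, boundary components) that distinguishes the associative case from the commutative one. First I would recall the dualized description of the $i$th graded piece of the Feynman transform,
\[\F^i_{\De}\O((g,0))=\bigoplus_{G}\left[\mathfrak{K}(G)\otimes\De(G)\otimes\O((G))^*\right]_{\Aut G},\]
where the sum is over isomorphism classes of stable graphs with $i$ edges. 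The whole proof hinges on interpreting the $\O$-decorations $\O((G))=\bigotimes_v\O((g(v),\Flag(v)))$ in terms of the geometry provided by Theorem~\ref{tft}(2): a decoration of a vertex $v$ by an element of $\Asso((g(v),\Flag(v)))\cong\OTFT((g(v),\Flag(v)))$ is precisely a ghost surface with the appropriate genus defect, boundary defect, and cyclically ordered partition of the flags $\Flag(v)$. Thus a decorated stable graph becomes exactly a prestable ribbon graph, and I would make this dictionary explicit for each of the three operads $\O=\Assod,\KAssd,\Assud$, using the quotient relations of Remark~\ref{crapsupercrap} to see that $\KAss$ kills vertices with nonzero boundary defect and $\Assu$ additionally kills all higher defects, matching the definitions of stable ribbon graphs and ribbon graphs respectively.

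Second, I would match the orientation data. The key point is that a $d$-orientation on a prestable graph was defined precisely so as to determine a nonzero element of $\mathfrak{K}(G)\otimes\Det^d(G)\otimes\OTFT^d((G))$ (see Remark~\ref{OTFTorientation}), and reversing orientation negates it. Hence the vector space spanned by isomorphism classes of $d$-oriented prestable graphs modulo the oppositely-oriented-equals-negative relation is canonically identified with the coinvariants $\left[\mathfrak{K}(G)\otimes\De(G)\otimes\O((G))\right]_{\Aut G}$, and dualizing gives the cochain-level match with $\F^\bullet_{\De}\O$. Here I would invoke Remark~\ref{OTFTorientation} to handle the subtlety that $\Aut(G)$ can act nontrivially on $\Det^d(H_1)$ precisely when a surface has more than one free boundary component; for the untwisted case $d=0$ this is irrelevant, and for $d=1$ it is exactly what makes the orientation sign conventions consistent.

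Third, I would check that the differentials agree. The Feynman-transform differential is dual to the sum over edge-contractions given by operadic composition in $\O$, while the graph-complex differential $d(G)=\sum_e G_e$ contracts edges, coalescing the two ghost surfaces bounding $e$ into one (the surface $S_{v_1v_2}$). Under the dictionary above these are literally the same operation, since operadic composition in $\OTFT$ is glueing of surfaces at open boundaries, which is exactly the edge-coalescing described before Figure~4. For $\KAss$ and $\Assu$ the convention that a contraction producing a prohibited vertex is set to zero corresponds precisely to the vanishing of that composite in the quotient operad. Finally, the indexing decomposition $\bigoplus_{2(\gamma-1)+\nu=g-1,\,\nu>0}$ is forced by the definition of $\OTFT^d((g,n))$ as the sum over $(\gamma,\nu)$ with $2(\gamma-1)+\nu=g-1$, so summing the $(\gamma,\nu)$-components of the graph complex reconstructs the full genus-$g$ piece.

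The main obstacle, I expect, is not any single deep step but the careful verification of sign compatibility in the $d=1$ twisted case: one must confirm that the identification of a $1$-orientation (ordering of vertices, orientations of edges, and orderings of the flag-blocks at each vertex) with a generator of $\mathfrak{K}(G)\otimes\Det(H_1)\otimes\OTFT^1((G))$ is natural under edge-contraction, so that the two differentials match on the nose rather than up to a global sign. This requires tracing through the Mayer--Vietoris identification of Remark~\ref{OTFTorientation} and checking it behaves correctly when an edge is coalesced and $H_1$ changes. Granting that bookkeeping, the proposition genuinely does follow, as the statement says, from unraveling the definitions.
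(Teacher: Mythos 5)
Your proposal is correct and follows exactly the route the paper intends: the paper dismisses this proposition as ``simply unraveling the definitions'' (in analogy with Proposition~\ref{stablecom}), and your three steps --- identifying $\O$-decorated stable graphs with prestable/stable/ribbon graphs via Theorem~\ref{tft}(2), matching $d$-orientations with generators of $\mathfrak{K}(G)\otimes\De(G)\otimes\O((G))$ via Remark~\ref{OTFTorientation}, and checking that edge-contraction differentials and the $(\gamma,\nu)$-indexing agree --- are precisely that unraveling, spelled out. Your closing caveat about sign-compatibility under contraction in the $d=1$ case is a legitimate bookkeeping point, but it is exactly the kind of verification the paper's ``direct inspection'' subsumes, so there is no gap.
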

\noproof

\section{Examples}
To understand how Theorem \ref{dualalg} works in more down-to-earth terms, let us consider the case $\O=\Ass$, and let $V$ be a dg algebra with  a contracting homotopy $s:V\ra V$ and an invariant inner product $\langle, \rangle$ of degree $d$. We will now recall, in slightly different languange, the formulation of the dual construction given by Kontsevich in \cite{kontfeynman}.
Let $G$ be a $d$-oriented ribbon graph without legs.
Recall that a $1$-orientation means that the vertices are ordered and each edge is directed; reversing the order of two vertices or the direction of an edge results in changing the sign of~$G$. Similarly a $0$-orientation is an ordering of the edges, again with the understanding that an odd permutation means a reversal of sign. We associate the tensor $\langle v_1\ldots v_{n-1},v_{n}\rangle$ to any vertex of $G$ of valence $n$; this tensor can be viewed as an element in $\Pi^{d(n-1)}V^{\otimes n}$ by virtue of the isomorphism $\Pi^{d}V\cong V^*$. We then associate the `propagator' $\langle ?,s(?)\rangle\in \Pi^{d+1}(V^*\otimes V^*)$ to any edge of $G$ and perform contractions along all the edges of $G$. The resulting function $Z_V(G)$ on ribbon graphs, called the \emph{partition function} of $V$, is thus a cocycle on an appropriate version of the ribbon graph complex.

However, one needs to be careful to specify in what sort of ribbon graph complex a cycle is obtained. The original version of the (ribbon) graph complex defined by Kontsevich is what we have denoted by $\G_\bullet^{\Assud}$. It corresponds to the dual Feynman transform of the modular $\De$-operad $\Assud$ since the formula for the differential in this complex does not include contractions of loops. An algebra over this modular operad is \emph{not} an arbitrary differential graded Frobenius algebra, but one satisfying (\ref{rel1}) and (\ref{rel2}).

Similarly, a contractible dg Frobenius algebra satisfying (\ref{rel1}) determines a cocycle on the
stable ribbon graph complex $\G_{\bullet}^{\KAssd}$,
 and finally an arbitrary contractible dg Frobenius algebra determines a cocycle
 on the prestable graph complex $\G_{\bullet}^{\Assod}$.
\begin{example}\label{twodimensional}
Let $V$ be a two-dimensional $\Z/2$-graded vector space spanned by the elements $a$ and $1$ in degrees $1$ and $0$ respectively. We define an associative product on $V$ by requiring that $1$ be a two-sided unit and that $a^2=1$. There is an odd scalar product on $V$ given by $\langle a,1\rangle=1$. The differential $d$ is given by the formula $d(a)=1$ and $d(1)=0$. Finally, the contracting homotopy for $d$ has the form $s(1)=a$ and $s(a)=0$. Note that $\langle,\rangle^{-1}=a\otimes 1-1\otimes a$ and so (\ref{rel1}) is satisfied but  (\ref{rel2}) \emph{is not}.

Observe that the propagator $\langle ?,s(?)\rangle$ is zero on $1\otimes a$, $a\otimes a$ and $a\otimes 1$, so in the expression for the Feynman amplitude of a ribbon graph the non-zero contributions only come from the graphs having all their half-edges decorated by the element $1$. It follows that all vertices of such graphs should have odd valence. Consider the following ribbon graph with three vertices $A,B$ and $C$:\\
\[G=\xymatrix{*=0{\bullet}\ar^<A@{-}@/_-2pc/[rr]&*=0{\bullet}\ar@{-}[l]
\ar_<B@{-}[r]\@{-}@(ul, dl)\ar@{-}@(dl,ul)[]&*=0{\bullet}\ar_<C@{-}@/^2pc/[ll] }\]\\
Here the cyclic ordering around each vertex is given by a local embedding into the $2$-plane as shown in the picture above. We will first regard $G$ as being an element in
$\G_{\bullet}^{\Assuo}$,
 i.e. the usual graph complex where contracting loops is not allowed. The differential applied to $G$ produces a sum of four ribbon graphs. The two obtained by contracting the edge $AB$ or $BC$ have vertices of even valence and thus zero amplitude for $V$.  The other two are isomorphic \emph{and identically oriented} graphs of the following shape:\\
\[\xymatrix{*=0{\bullet}\ar@{-}@/_-1.5pc/[rr]&*=0{}\ar@{-}[l]
\ar@{-}[r]&*=0{\bullet}\@{-}@(ul, dl)\ar@{-}@(dl,ul)[]\ar@{-}@/^1.5pc/[ll] }\]\\
It follows that the amplitude corresponding to the linear combination of graphs $d(G)$ for the algebra $V$ is equal to $\pm 2$; the sign here depends on the orientation of $G$. Therefore $V$ \emph{does not} give a cocycle on the usual twisted ribbon graph complex $\G_{\bullet}^{\Assuo}$.

However we know that $V$ must give a cocycle in the twisted stable ribbon graph complex $\G_{\bullet}^{\KAsso}$; indeed one can check that the non-zero term described above cancels with the amplitude corresponding to the graph obtained by contracting the loop situated at the vertex $B$.
\end{example}

\begin{example}
The $\Z/2$-graded associative algebra underlying the contractible dga $V$ considered in the previous example gives rise to an involution in the super-Brauer group of $\ground$, i.e. the group of central simple $\Z/2$-graded associative $\ground$-algebra modulo Morita equivalence. If $\ground$ is algebraically closed this involution generates the super-Brauer group. It turns out that $V$ gives rise to a non-trivial cocycle on the twisted stable graph complex; the proof will be given elsewhere.
 On the other hand
the identity element is represented by the algebras
$\End(k^m\oplus(\Pi k)^n)$. It could be shown that any $\dft{\De}{\Assod}$-algebra structure supported on such a matrix algebra gives rise to the trivial cocycle on the twisted stable ribbon graph complex.
\end{example}

\begin{example}
Consider the two-dimensional contractible dga $V$ with an invariant odd scalar product, defined exactly as in Example~\ref{twodimensional}, except that the associative product is given by $a^2=0$. With this modification, both conditions (\ref{rel1}) and (\ref{rel2}) are satisfied; hence $V$ is an algebra over $\dft{\Det}{\Assuo}$. However since the corresponding Feynman amplitudes for all ribbon graph without legs are zero, the algebra $V$ does not furnish any nontrivial cocycles on the twisted ribbon graph complex.
\end{example}

\section{Moduli spaces of metric graphs}
In this section we introduce spaces of metric commutative and associative graphs and show that the algebraic graph complexes considered in section~\ref{stablegraphs} compute the homology
of various sheaves on these spaces.  For simplicity we treat graphs without legs throughout this section.
\subsection{Commutative case}

Consider the following functor from the opposite category $\Gamma^{op}((g,0))$ of stable (commutative) graphs without legs to the category $\TOP$ of topological spaces:
\[M^{\Commo}_g:G\mapsto\text{ the set of all functions $l:\Edge(G)\ra\mathbb{R}_{\geq 0}$ such that $\sum_{e\in\Edge(G)}l(e)=1$}.\]

\begin{defi}The \emph{moduli space of stable metric graphs of genus $g$} is the colimit of the functor $M^{\Commo}_g$. It will be denoted by $\Mo^{\Commo}_g$.
\end{defi}
It is clear that the space $\Mo^{\Commo}_g$ consists of conformal isomorphism classes of stable metric graphs (whose first homology group has rank $g$), i.e. stable graphs whose edges are supplied with a positive length function. Contracting an edge corresponds to setting its length to zero.
\begin{rem}
There is an open dense subspace $\Mo^{\Comm}_g$ in $\Mo^{\Commo}_g$ consisting of the usual metric graphs (i.e. those with vertices all of genus zero).
The space $\Mo^{\Comm}_g$ is a classifying space for $\operatorname{Out}(F_g)$,
 the group of outer automorphisms of the free group on $g$ generators, see \cite{CulV}. Allowing the contraction of loops leads to the compactification $\Mo^{\Commo}_g$.
\end{rem}

The space $\Mo^{\Commo}_g$ is an (orbi)-simplicial complex: it is a union of quotients of open topological simplices by actions of finite groups. There is precisely one simplex $\sigma_G$ for each each isomorphism class of $G\in\Gamma((g,0))$. Denote by $\st(\sigma_G)$ (the \emph{open star} of $\sigma_G$) the union of interiors of all simplices containing a given simplex $\sigma_G$. Clearly the collection $\{\st(\sigma_G)\}$ is an open covering of $\Mo^{\Commo}_g$.

Consider  a functor $F:\Gamma^{op}((g,0))\mapsto \dgVect$. Then $F$ gives rise to a dg sheaf ${\mathscr F}$ on $\Mo^{\Commo}_g$. Namely, we define ${\mathscr F}$ to be the sheaf associated with the presheaf
whose space of sections over $\st(\sigma_G)$ is
 $F(G)_{\Aut(G)}$. This sheaf is \emph{constructible}, i.e. its restriction onto the interior of each simplex $\sigma_G$ is isomorphic to a constant sheaf.

Now let $\O$ be a modular $\De$-operad. Let ${\mathscr F}_\O$ be the dg sheaf on
$\Mo^{\Commo}_g$ corresponding to the functor $F_\O:G\mapsto {\De}(G)\otimes\O((G))^*$.  Note that the Feynman transform $\FDeo((g,0))$ is a dg vector space augmented into $\O((g,0))^*$. 
 Denote by $\hat{\F}_{\De}\O((g,0))$ the dg vector space $\Pi\Ker(\F_{\De}\O((g,0))\ra \O((g,0))^*)$. In other words $\hat{\F}_{\De}\O((g,0))$ is obtained from the Feynman transform $\FDeo((g,0))$ by throwing away the term corresponding to the graph with no edges and then reversing parity.

\begin{theorem}\label{orbi}
There is an isomorphism
\[H\left(\Mo^{\Commo}_g,{\mathscr F}_{\O}\right)\cong H\left({\hat{\F}}_{\De}\O((g,0))\right).\]

\end{theorem}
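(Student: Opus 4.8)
The plan is to compute the hypercohomology of the constructible dg sheaf ${\mathscr F}_\O$ directly from the simplicial structure of $\Mo^{\Commo}_g$, producing a cellular cochain complex which I then identify with $\hat{\F}_{\De}\O((g,0))$. The key geometric observation is that the simplex $\sigma_G$ attached to a stable graph $G$ has its vertices indexed by $\Edge(G)$, whence $\dim\sigma_G=\abs{\Edge(G)}-1$ and the orientation line of $\sigma_G$ is canonically $\ori(\sigma_G)\cong\Det(\Edge(G))=\mathfrak K(G)$. Two consequences are already visible: the graph with no edges spans no simplex (its length simplex is empty), which is exactly why the basepoint summand $\O((g,0))^*$ of $\FDeo((g,0))$ is discarded in passing to $\hat{\F}_{\De}\O((g,0))$; and the shift $\dim\sigma_G=\abs{\Edge(G)}-1$ between cohomological degree and edge number will, in the $\Z/2$-graded setting, be absorbed into the parity reversal $\Pi$ occurring in the definition of $\hat{\F}_{\De}\O((g,0))$.

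First I would prove the following general principle: for any functor $F\colon\op\to\dgVect$, the associated dg sheaf ${\mathscr F}$ on $\Mo^{\Commo}_g$ has hypercohomology computed by the cellular cochain complex
\[C^p(F)=\bigoplus_{[G]\,:\,\abs{\Edge(G)}=p+1}\bigl[F(G)\otimes\ori(\sigma_G)\bigr]_{\Aut(G)},\]
whose differential is the cellular coboundary: on the summand indexed by $G$ it is the sum, over all edge-expansions $G'$ of $G$ (so that $\sigma_G$ is a codimension-one face of $\sigma_{G'}$), of the generization maps $F(G)\to F(G')$, the incidence signs being recorded by the orientation lines. The engine here is star-acyclicity: each open star $\st(\sigma_G)$ conically retracts onto the stratum indexed by $G$, and every nonempty finite intersection $\bigcap_i\st(\sigma_{G_i})$ is again an open star of this kind, so that ${\mathscr F}$ restricted to it is acyclic with global sections $F(G)_{\Aut(G)}$ (these are exactly the sections prescribed in the definition of ${\mathscr F}$). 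A Leray argument for the cover $\{\st(\sigma_G)\}$ then identifies $H(\Mo^{\Commo}_g,{\mathscr F})$ with the cohomology of $C^\bullet(F)$, the orientation twist entering through the choice of cellular orientations packaged invariantly as $\Det$ of the vertex set.

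The one point requiring real care is the orbifold structure: $\Mo^{\Commo}_g$ is a quotient of an honest simplicial complex (a space of marked metric graphs) by the action of finite automorphism groups. I would therefore run the argument above equivariantly on this simplicial complex and then pass to coinvariants under the groups $\Aut(G)$. Since $\operatorname{char}\ground=0$, the coinvariants functor is exact and commutes with the formation of cohomology, so the equivariant cellular complex descends to $C^\bullet(F)$ with coinvariants taken summand by summand, precisely as displayed. Making this descent precise --- in particular verifying that star-acyclicity survives the finite quotients and that the colimit defining $\Mo^{\Commo}_g$ really is the orbi-simplicial complex assumed above --- is the step I expect to be the main obstacle.

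It then remains to specialize to $F=F_\O$, where $F_\O(G)=\De(G)\otimes\O((G))^*$. Using $\ori(\sigma_G)\cong\mathfrak K(G)$ we obtain
\[C^p(F_\O)=\bigoplus_{\abs{\Edge(G)}=p+1}\bigl[\mathfrak K(G)\otimes\De(G)\otimes\O((G))^*\bigr]_{\Aut(G)}=\F^{p+1}_{\De}\O,\]
so $C^\bullet(F_\O)$ is $\Ker\!\bigl(\FDeo((g,0))\to\O((g,0))^*\bigr)$ with its grading shifted down by one; as the bookkeeping above confirms, this degree shift is exactly the parity reversion $\Pi$, yielding $C^\bullet(F_\O)\cong\hat{\F}_{\De}\O((g,0))$ together with their total differentials. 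Finally I would match the two differentials: the cellular coboundary is assembled from the generization maps $F_\O(G)\to F_\O(G')$ for edge-expansions, and by construction these are the $\De$-twisted duals of the operadic contractions $\O((G'))\to\O((G))$ --- that is, precisely the edge-expansion part of the Feynman differential, with signs matched through the $\mathfrak K(G)$ lines. Combined with the internal differential induced by that of $\O$, this gives the asserted isomorphism $H(\Mo^{\Commo}_g,{\mathscr F}_\O)\cong H(\hat{\F}_{\De}\O((g,0)))$.
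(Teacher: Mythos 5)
Your proposal is correct and follows essentially the same route as the paper's proof: the paper likewise rigidifies $\Mo^{\Commo}_g$ as the quotient of an honest simplicial complex of labeled stable metric graphs by a finite symmetric group $A$, computes the cohomology of the pulled-back constructible sheaf via the \v{C}ech complex of the open-star covering (star-acyclicity/Leray), and uses exactness of taking $A$-invariants in characteristic zero to descend and identify the resulting complex with $\hat{\F}_{\De}\O((g,0))$. The only cosmetic differences are that the paper phrases the descent through the adjoint functors $f^{-1}$ and $Rf^A_*$ and takes invariants rather than coinvariants, which agree here since $\operatorname{char}\ground=0$.
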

\begin{rem}
The reason we need to consider ${\hat{\F}}_{\De}\O((g,0))$ rather than ${{\F}}_{\De}\O((g,0))$ is that in our simplicial framework a metric graph necessarily has at least one non-trivial edge.
\end{rem}
Before tackling the proof of Theorem \ref{orbi} observe the following special cases corresponding to the operads $\Commod$ or
$\Commud$. Denote by $\det^d$ the sheaf on $\Mo^{\Commo}$ corresponding to the modular $\De$-operad $\Commod$. Note that if $d=0$ then $\det^d$ is the constant sheaf $\ground$. By abuse of notation we will also denote by $\det^d$ the inverse image  (restriction) of $\det^d$ on $\Mo^{\Comm}_g\subset \Mo^{\Commo}_g$. For $\O=\Commod$ or $\O=\Commud$ define $\overline{H}^\bullet_{\O}(g)$  by the following formula:
\[\hat{H}^k_{\O}(g)=\begin{cases}H^{k-1}_{\O}(g) & \text{if } k>0\\
\ground & \text {if } k=0\end{cases}\]

\begin{cor}\label{orb}
There is a following isomorphism; here $H_c(-)$ denotes cohomology with compact supports:
\[H^\bullet_c(\Mo^{\O}_g, {\det}^d) \cong \hat{H}^\bullet_{\O}(g).\]
\end{cor}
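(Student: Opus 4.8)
The plan is to derive the Corollary from Theorem \ref{orbi} by passing from ordinary sheaf cohomology to cohomology with compact supports, treating the two operads uniformly through the open--closed decomposition $j\colon \Mo^{\Comm}_g \hookrightarrow \Mo^{\Commo}_g$ with closed complement $i\colon Z\hookrightarrow \Mo^{\Commo}_g$, where $Z$ is the subcomplex of metric graphs carrying at least one dotted loop (equivalently, having a vertex of positive genus). First I would record the sheaf identifications. By construction $\det^d={\mathscr F}_{\Commod}$, and since $\Commud((G))=0$ as soon as $G$ has a positive-genus vertex, the extension by zero $j_!j^*\det^d$ coincides with ${\mathscr F}_{\Commud}$ as a constructible sheaf on $\Mo^{\Commo}_g$. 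This is the step that pairs each operad with its space: $\Commod$ with the compact $\Mo^{\Commo}_g$, and $\Commud$ with the open $\Mo^{\Comm}_g$.

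Next I would reduce both cases to Theorem \ref{orbi}. For the open case, since $\Mo^{\Comm}_g$ is open in the compact orbi-simplicial complex $\Mo^{\Commo}_g$, one has $H^\bullet_c(\Mo^{\Comm}_g,\det^d)\cong H^\bullet(\Mo^{\Commo}_g, j_!j^*\det^d)=H^\bullet(\Mo^{\Commo}_g,{\mathscr F}_{\Commud})$, which Theorem \ref{orbi} computes as $H(\hat{\F}_{\De}\Commud((g,0)))$; for the compact case $H^\bullet_c=H^\bullet$ and Theorem \ref{orbi} applies directly to ${\mathscr F}_{\Commod}$. It then remains to reconcile the grading: Theorem \ref{orbi} outputs the cohomology of the expansion complex $\hat{\F}_{\De}\O((g,0))$, whereas $\hat H^\bullet_\O(g)$ is built from $H^\bullet_\O(g)=H^\bullet(\G^\bullet_\O(g))$ by a degree shift together with an extra copy of $\ground$ in degree $0$. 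I would bridge this by invoking Poincar\'e--Lefschetz (equivalently Verdier) duality on the orbifold $\Mo^{\Commo}_g$, and on its open part, with $\det^d$ serving as the orientation local system; this exchanges $H^\bullet$ and $H^\bullet_c$, produces exactly the reindexing $k\mapsto k-1$, and accounts for the surviving $\ground$ in degree $0$ via the no-edge augmentation term that the $\hat{}$-construction of Theorem \ref{orbi} had removed.

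The hard part will be the duality bookkeeping. I expect two technical points: justifying Poincar\'e--Lefschetz duality for the singular orbifold $\Mo^{\Commo}_g$ twisted by $\det^d$ (here the characteristic-zero hypothesis renders the finite isotropy groups harmless, so the duality holds with $\ground$-coefficients after the $\det^d$-twist), and tracking cohomological degree and parity precisely enough that the output of Theorem \ref{orbi} lands as $\hat H^\bullet_\O(g)$, including the degree-$0$ term. Alternatively, and perhaps more transparently, one can bypass explicit duality by identifying the \emph{compactly supported} cellular cochain complex of $\Mo^{\O}_g$ with $\det^d$-coefficients directly: its degree-$p$ part is spanned by $d$-oriented stable commutative graphs and its differential is edge contraction, so it is manifestly the graph complex $\G_\bullet^{\Commud}$ (respectively $\G_\bullet^{\Commod}$), up to the reindexing built into the definition of $\hat H^\bullet_\O(g)$; the quotient map $\G_\bullet^{\Commod}\to\G_\bullet^{\Commud}$ killing dotted-loop graphs then matches the excision sequence of the pair $(\Mo^{\Commo}_g,Z)$, tying the two cases together.
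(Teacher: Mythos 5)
Your opening reduction is correct and is precisely the route the paper intends (the Corollary is stated without proof, as an immediate specialization of Theorem \ref{orbi}): one has $\mathscr{F}_{\Commod}=\det^d$ on the compact space $\Mo^{\Commo}_g$, where $H^\bullet_c=H^\bullet$; and $\mathscr{F}_{\Commud}\cong j_!j^*\det^d$ because $\Commud((G))=0$ whenever $G$ has a vertex of positive genus, whence $H^\bullet(\Mo^{\Commo}_g,\mathscr{F}_{\Commud})\cong H^\bullet_c(\Mo^{\Comm}_g,\det^d)$. Up to this point you agree with the paper.

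The gap is the final step. No Poincar\'e--Lefschetz or Verdier duality is involved here, and appealing to it cannot be made to work. After your reduction, \emph{both} cases are already expressed as ordinary sheaf cohomology on $\Mo^{\Commo}_g$ of sheaves of the form $\mathscr{F}_\O$, which is exactly what Theorem \ref{orbi} computes; there is no remaining interchange of $H^\bullet$ and $H^\bullet_c$ to perform. Moreover, duality reindexes by the reflection $k\mapsto\dim-k$ against a dualizing complex, never by a degree shift, so it cannot ``produce the reindexing $k\mapsto k-1$''; and in any case $\Mo^{\Commo}_g$ is a genuinely singular (orbi-)simplicial complex, not a manifold nor a finite quotient of one, so the duality you invoke, with the coefficient sheaf $\det^d$ cast as an orientation system, is simply not available. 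What actually remains is elementary homological algebra with no geometry in it: by Proposition \ref{stablecom}, $\F^\bullet_{\De}\O((g,0))\cong\G^\bullet_{\O}(g)$, and by definition $\hat{\F}_{\De}\O((g,0))$ is the suspension of the subcomplex of $\F_{\De}\O((g,0))$ spanned by graphs with at least one edge (equivalently, of the \v{C}ech complex of the star covering, whose degree-$p$ term is spanned by graphs with $p+1$ edges). Comparing $H(\hat{\F}_{\De}\O((g,0)))$ with $H^\bullet_{\O}(g)$ is then the long exact sequence of this one-term truncation, in which the no-edge term $\O((g,0))^*$ produces, via the connecting homomorphism, the extra copy of $\ground$; note that the two operads behave differently here, since $\Commud((g,0))=0$ for $g>0$ while $\Commod((g,0))\neq 0$, so the low-degree bookkeeping must be done separately. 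Your proposal never carries this out; it only asserts that duality will yield the right shift, and this degree bookkeeping is exactly where all the delicacy of the statement lies. A further slip in your alternative ``cellular'' route: the compactly supported cochain differential is dual to the simplicial boundary, i.e.\ it is edge \emph{expansion}; edge contraction is the differential of the chain complex $\G_\bullet^{\Commud}$, so your identification as written conflates chains with cochains.
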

\noproof
\begin{rem}
For uniformity we used cohomology with compact supports in the formulation of the above theorem for both moduli spaces. Note, however, that the space $\Mo^{\Commo}_{g}$ is itself compact and so the usual sheaf cohomology is the same as cohomology with compact supports.
\end{rem}

In preparation for the proof of Theorem \ref{orbi} we will introduce the notion of a \emph{labeled stable graph}. Note that the number of half-edges of a stable graph of genus $g$ is not greater than $6g-6$. Let $S$ be a finite set of cardinality not less than $6g-6$ and consider the category $\C_g$ of \emph{$S$-labeled stable graphs} (or simply labeled stable graphs)  which is defined as follows.

An object of $\C_g$ is a stable graph $G$ of genus $g$ without legs together with a injective map $\Flag(G)\hookrightarrow S$; thus the half-edges of $G$ are labeled by distinct elements of $S$. The morphisms in $\C_g$ are morphisms of stable graphs compatible with labellings, so they are generated by isomorphisms preserving labels and by edge-contractions.
Note that a labeled stable graph has no non-trivial automorphisms.
 The permutation group $A:=\Aut(S)$ acts on the category $\mathcal C_g$ by changing the labels of the half-edges of labeled stable graphs.


Consider the following functor from the opposite category $\mathcal C^{op}_g$ to the category $\TOP$:
\[F_g:G\mapsto\text{ the set of all functions $l:\Edge(G)\ra\mathbb{R}_{\geq 0}$ such that $\sum_{e\in\Edge(G)}l(e)=1$}.\]

\begin{defi}The \emph{moduli space of labeled stable metric graphs of genus $g$} is the colimit of the functor $F_g$. It will be denoted by $\MM_g$.
\end{defi}
It is clear that the space $\MM_g$ consists of conformal isomorphism classes of labeled stable metric graphs (whose first homology group has rank $g$), i.e. labeled stable graphs whose edges are supplied with a positive length function. Contracting an edge corresponds to setting its length to zero.

The space $\MM_g$ is a simplicial complex with precisely one simplex $\sigma_G$ for each isomorphism class of labeled stable graph $G\in\C_g$. The group $A$ acts on this simplicial complex properly discontinuously; the stabilizer of a simplex $\sigma_G$ is the group $\Aut(G)$ of automorphisms of the graph $G$.
The functor $\C_g\ra \Gamma((g,0))$ of forgetting the labelings on half-edges determines a map of topological spaces $f:\MM_g\ra \Mo^{\Commo}_g$, which induces a homeomorphism between the quotient $\MM_g/A$ and $\Mo^{\Commo}_g$.
 In other words, we have represented the space $\Mo^{\Commo}_g$ of stable metric graphs as a global quotient of a simplicial complex by the action of a finite group.

Denote by $\st(\sigma_G)$
 the union of interiors of all simplices containing a given simplex $\sigma_G$. Clearly the collection $\{\st(\sigma_G)\}$ is an open covering of $\MM_g$.

Consider  a functor $F:\C_g\ra \dgVect$. Then $F$ gives rise to a dg sheaf ${\mathcal F}$ on $\MM_g$. Namely, we define ${\mathcal F}$ to be the sheaf associated with the presheaf
whose space of sections over $\st(\sigma_G)$ is
 $F(G)$. This sheaf is \emph{constructible}, i.e. its restriction onto the interior of each simplex $\sigma_G$ is isomorphic to a constant sheaf.

Now consider a functor $\Gamma((g,0))^{op}\ra\dgVect$ and its restriction along the label-forgetting functor $\C_g\ra\Gamma((g,0))$. Associated to these functors are the sheaves $\mathscr F$ and $\mathcal F$ on $\Mo^{\Commo}_g$ and $\MM_g$ respectively. It follows directly from definitions that \begin{itemize}\item
$\mathcal F=f^{-1}\mathscr F$;
\item$\mathscr F=f^A_*\mathcal F$.\end{itemize}
Here $f^{-1}$ and  $f^A_*$ stand  for the inverse image and $A$-equivariant direct image functors respectively.
We can now prove Theorem \ref{orbi} following \cite{LazVor}, Theorem 3.7.
\begin{proof}[Proof of Theorem \ref{orbi}]
Note that the functors $f^{-1}$ and $Rf_*^A$ establish an equivalence of the derived category of sheaves on $\MM_g$ with  a full subcategory in the derived category of $A$-equivariant sheaves on $\Mo^{\Commo}_g$.  Therefore there is an isomorphism
\[R\Gamma(\Mo^{\Commo}_g,\mathscr F)\cong R\Gamma^A(\MM_g,\mathcal F)\]
where $R\Gamma^A(\MM_g,\mathcal F)=\left(R\Gamma(\MM_g, F)\right)^A$ is the $A$-equivariant derived global sections functor (observe that since $A$ is a finite group the functor of $A$-invariants is exact).
Furthermore, since $\mathcal F$ is a constructible sheaf its derived global sections could be computed using the \v{C}ech complex $\check{C}(\MM_g,\mathcal F)$ corresponding to the covering of $\MM_g$ by the open stars of its vertices. It only remains to note that the dg vector space of $A$-invariants $\check{C}^A(\MM_g,\mathcal F)$ is isomorphic to ${\hat{\F}}_{\De}\O((g,0))$.
\end{proof}

\subsection{Associative case}
The moduli spaces of metric ribbon graphs and their decorated versions are treated in a way completely parallel to the case of metric commutative graphs. Therefore we shall restrict ourselves with providing the relevant definitions and formulations. In fact, the most general treatment in the associative case would entail considering modular non-$\Sigma$-operads and corresponding sheaves on the moduli spaces of metric ribbon graphs. We won't go so far and instead consider the most important special cases related to the operad $\Ass$.

Let
$\Gamma((\gamma,\nu,0))$
 be the category whose objects are prestable ribbon graphs of genus $\gamma$ with $\nu$ boundary components and having no legs; the morphisms are generated by isomorphisms and edge-contractions.

Consider the following functors from the opposite category
 $\Gamma^{op}((\gamma,\nu,0))$
 of prestable graphs  to the category $\TOP$ of topological spaces:
\[M^{\overline{\Ass}}_{\gamma,\nu}:G\mapsto\text{ the set of all functions $l:\Edge(G)\ra\mathbb{R}_{\geq 0}$ and such that $\sum_{e\in\Edge(G)}l(e)=1$}.\]
\begin{defi}The \emph{moduli space of prestable metric ribbon graphs
} (or simply \emph{prestable metric graphs}) is the colimit of the functor $M^{\Asso}_{\gamma,\nu}$. It will be denoted by $\Mo^{\Asso}_{\gamma,\nu}$.
\end{defi}
Similarly to the commutative case the points in $\Mo^{\Asso}_{\gamma,\nu}$ are conformal isomorphism classes of stable metric ribbon graphs (of genus $\gamma$ with $\nu$ boundary components) i.e. those stable ribbon graphs whose edges are supplied with a positive length function $l$. There are certain natural subspaces in $\Mo^{\overline{\Ass}}_{\gamma,\nu}$.
\begin{defi}\
\begin{enumerate}
\item The \emph{moduli space of stable metric ribbon graphs} is the subspace $\Mo^{\KAss}_{\gamma,\nu}$ of $\Mo^{\Asso}_{\gamma,\nu}$ consisting of those prestable metric graphs
which have no vertices of positive boundary defect.
\item
The \emph{moduli space of metric ribbon graphs} is the subspace
$\Mo^{\Ass}_{\gamma,\nu}$ of $\Mo^{\Asso}_{\gamma,\nu}$ consisting of prestable metric graphs whose ghost surfaces are spheres with only one boundary component.
\end{enumerate}
\end{defi}
It is clear that the space $\Mo^{\Asso}_{\gamma,\nu}$  consists of conformal isomorphism classes of usual metric ribbon graphs of genus $\gamma$ and with $\nu$ boundary components i.e. graphs whose edges are supplied with a positive length function. Contracting an edge corresponds to setting its length to zero. Allowing the contraction of loops we arrive at two different notions of stability: if the contraction of boundary components is prohibited we obtain the notion of a stable metric ribbon graph; dropping this restriction we get prestable metric ribbon graphs. Note in passing that there are other moduli spaces corresponding to modular operads intermediate between $\Assu$ and $\Asso$; see Remark~\ref{operadzoo}.

We can now formulate the analogue of Theorem \ref{orbi} (or, rather, of Corollary \ref{orb})  in the case of associative graphs. The proof, which carries over from the commutative case almost verbatim, will be omitted.
For $\O=\Assud$, $\Assod$ or $\KAssd$  we define
\[\hat{H}^k_{\O}(\gamma,\nu)=\begin{cases}H^{k-1}_{\O}(\gamma,\nu) & \text{if } k>0\\
\ground & \text {if } k=0\end{cases}\]
\begin{theorem}\label{orbi1}
There is an isomorphism:
\[ H^\bullet_c(\Mo^{\O}_{\gamma,\nu}, {\det}^d) \cong
\hat{H}^\bullet_{\O}(\gamma,\nu).\]
\end{theorem}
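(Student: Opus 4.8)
The plan is to transpose the proof of Theorem~\ref{orbi} to the ribbon setting essentially verbatim, the only genuinely new bookkeeping being the passage from the compact prestable moduli space to the open subspaces attached to the quotient operads $\KAssd$ and $\Assud$. First I would introduce, for a sufficiently large finite set $S$, the category $\C_{\gamma,\nu}$ of $S$-labeled prestable ribbon graphs of genus $\gamma$ with $\nu$ boundary components: objects are prestable graphs together with an injection $\Flag(G)\hookrightarrow S$, and morphisms are generated by label-preserving isomorphisms and edge-contractions. As in the commutative case a labeled graph has no nontrivial automorphisms, so the colimit $\MM_{\gamma,\nu}$ of the length functor $G\mapsto\{l\colon\Edge(G)\ra\R_{\geq 0}\mid\sum_e l(e)=1\}$ is an honest simplicial complex on which $A:=\Aut(S)$ acts with $\MM_{\gamma,\nu}/A\cong\Mo^{\Asso}_{\gamma,\nu}$ and stabilizers $\Aut(G)$. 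The ribbon data (cyclic orderings of flags and ghost-surface decorations) is carried along purely formally and causes no trouble here precisely because the $\OTFT$-decoration has already been absorbed into the definition of a prestable graph.

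Next I would run the sheaf-theoretic argument of the proof of Theorem~\ref{orbi} unchanged. For a functor on the opposite category of prestable graphs valued in $\dgVect$ one obtains constructible dg sheaves $\mathscr F$ on $\Mo^{\Asso}_{\gamma,\nu}$ and $\mathcal F$ on $\MM_{\gamma,\nu}$ with $\mathcal F=f^{-1}\mathscr F$ and $\mathscr F=f^A_*\mathcal F$, where $f\colon\MM_{\gamma,\nu}\ra\Mo^{\Asso}_{\gamma,\nu}$ is the quotient map. The equivalence established by $f^{-1}$ and $Rf^A_*$ gives $R\Gamma(\Mo^{\Asso}_{\gamma,\nu},\mathscr F)\cong R\Gamma^A(\MM_{\gamma,\nu},\mathcal F)$, and computing the right-hand side by the \v{C}ech complex of the open-star covering and taking $A$-invariants identifies it with the $(\gamma,\nu)$-summand of $\hat{\F}_{\De}\O$. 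By the final Proposition of Section~\ref{stablegraphs} this summand has cohomology $H^\bullet_{\Assod}(\gamma,\nu)$, while the augmentation (removal of the edgeless graph) and parity shift built into $\hat{\F}_{\De}\O$ account exactly for the degree-$0$ unit and the shift $H^{k-1}$ in the definition of $\hat H^\bullet_{\O}$, precisely as in Corollary~\ref{orb}. Since $\Mo^{\Asso}_{\gamma,\nu}$ is compact, $H^\bullet_c=H^\bullet$ and this settles $\O=\Assod$.

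For $\KAssd$ and $\Assud$ I would pass to the open subspaces $\Mo^{\KAss}_{\gamma,\nu}\subset\Mo^{\Asso}_{\gamma,\nu}$ and $\Mo^{\Ass}_{\gamma,\nu}\subset\Mo^{\KAss}_{\gamma,\nu}$, with $\det^d$ replaced by its restriction. The closed complements are cut out by the vanishing conditions on boundary defect (respectively on both genus and boundary defect), and these are exactly the conditions defining, on the algebraic side, the subcomplexes of $\G_\bullet^{\Assod}(\gamma,\nu)$ whose quotients produce $\G_\bullet^{\KAssd}$ and $\G_\bullet^{\Assud}$ under the operad quotient maps $\Assod\ra\KAssd\ra\Assud$. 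The long exact sequence in compactly supported cohomology attached to each open/closed decomposition then matches term by term the algebraic short exact sequence of graph complexes, and the same \v{C}ech computation as above identifies $H^\bullet_c(\Mo^{\O}_{\gamma,\nu},\det^d)$ with $\hat H^\bullet_{\O}(\gamma,\nu)$ for the two remaining operads.

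The step requiring the most care is this last stratification argument: one must verify that the geometric closed strata (prestable graphs carrying a vertex of positive boundary, respectively genus, defect) are dual, through the sheaf $\det^d$, to the algebraic subcomplexes, so that excising the closed stratum on the space corresponds to passing to the quotient graph complex on the operad side, and that the compactly supported cohomology of the open stratum is genuinely computed by the resulting quotient \v{C}ech complex. Everything else---the absence of automorphisms of labeled graphs, the constructibility of $\mathscr F$ and $\mathcal F$, and the equivariant derived-category equivalence---transfers from Section~\ref{stablegraphs} without modification.
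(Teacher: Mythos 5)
Your proposal is correct, and its first half (labeled prestable graphs, the finite group $A=\Aut(S)$, the quotient homeomorphism, equivariant constructible sheaves, and the open-star \v{C}ech complex) is exactly the verbatim transfer of the proof of Theorem~\ref{orbi} that the paper intends. Where you genuinely diverge is in the treatment of $\KAssd$ and $\Assud$. The paper's mechanism---the one that makes Corollary~\ref{orb} a no-proof corollary of Theorem~\ref{orbi}---is to prove the comparison on the \emph{compact} space $\Mo^{\Asso}_{\gamma,\nu}$ for the sheaf $\mathscr{F}_{\O}$ attached to any of the three operads at once, and then to observe that for $\O=\KAssd$ (resp.\ $\Assud$) this sheaf has vanishing stalks on the closed complement of $\Mo^{\KAss}_{\gamma,\nu}$ (resp.\ $\Mo^{\Ass}_{\gamma,\nu}$), hence is canonically $j_!\bigl({\det}^d\bigr)$ for the open inclusion $j$; compactness of the ambient space then converts its hypercohomology directly into $H^\bullet_c$ of the open subspace, with the algebraic side already being the quotient graph complex. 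This handles all three operads in a single application of the general theorem, with no exact-sequence bookkeeping. Your route---excision of the closed stratum and matching of the long exact sequence in $H^\bullet_c$ against the algebraic short exact sequence of graph complexes---does work: the closed stratum is a closed subcomplex (defect conditions are preserved by edge contraction), so the same \v{C}ech argument computes its cohomology from the dual of the algebraic subcomplex. But note two points. First, as stated you have compatible isomorphisms only at the ambient and closed spots of the two sequences, and no map at the open spot, so the five lemma does not literally apply; you must either invoke the splitting available over a field (all terms are finite-dimensional $\ground$-vector spaces, so the open terms are determined by the other two and the maps between them) or, better, run the comparison at the chain level, where the kernel of the \v{C}ech restriction map is literally the \v{C}ech complex of $j_!{\det}^d$---at which point your argument collapses into the paper's one-step argument. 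Second, the complement of $\Mo^{\Ass}_{\gamma,\nu}$ is not cut out by vanishing of genus and boundary defects alone: you must also require the partition of half-edges at each vertex to be trivial, since a ghost sphere with two or more non-free boundary circles has both defects zero yet is still prohibited for a ribbon graph.
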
\noproof
\begin{rem}
Our combinatorial moduli spaces admit interpretations in terms of moduli spaces of algebraic curves over $\mathbb C$ as follows.
\begin{enumerate}\item Denote by $\Mo_{\gamma,\nu}$ the coarse (uncompactified) moduli space of curves of
genus $\gamma$ with $\nu$ marked points and by $\Delta^{n-1}$ the $(n-1)$-dimensional topological simplex. Then there is a homeomorphism $\left(\Mo_{\gamma,\nu}\times \Delta^{\nu-1}\right)/\S_\nu\cong\Mo^{\Ass}_{\gamma,\nu}$. Here $\S_\nu$ acts on $\Mo_{\gamma,\nu}$ by permuting the marked points, on $\Delta^{\nu-1}$ by permuting the barycentric coordinates.
This follows from the theory of Jenkins-Strebel differentials. \item Denote by $\overline{\Mo}_{\gamma,\nu}$
the moduli space of stable curves -- the Deligne-Mumford compactification of ${\Mo}_{\gamma,\nu}$. Two stable curves are \emph{equivalent}
if there is a homeomorphism between them that is complex-analytic on all components containing marked points. The
quotient $\K\Mo_{\gamma,\nu}$ of $\overline{\Mo}_{\gamma,\nu}$ by the closure of this equivalence relation is called the
Kontsevich compactification of ${\Mo}_{\gamma,\nu}$ and there is a homeomorphism $\Mo_{\gamma,\nu}^{\K\Ass}\cong \left(\K\Mo_{\gamma,\nu}\times \Delta^{\nu-1}\right)/\S_\nu$.
This is stated in \cite{Kon} and proved in \cite{Loo, Zv}.\item Consider the moduli space of stable curves of genus $\gamma$ with $\nu$ marked points
where each marked point is decorated by a non-negative number, its \emph{perimeter}. One requires that the sum of all perimeters equals $1$. This moduli space is clearly homeomorphic to $\overline{\Mo}_{\gamma,\nu}\times\Delta^{\nu-1}$.
Two stable curves are then \emph{equivalent} if there is a homeomorphism between them that is  complex analytic on all components having at least
one marked point with a non-zero perimeter. The quotient of $\overline{\Mo}_{\gamma,\nu}\times\Delta^{\nu-1}$ modulo the
closure of this equivalence relation is a compactification of the decorated moduli space of curves. The quotient of this compactification by $\S_\nu$ is homeomorphic to $\Mo^{\Asso}_{\gamma,\nu}$. This is proved in \cite{Mon}.
\end{enumerate}
\end{rem}

\section{BV-resolution of a modular operad}
Recall that algebras over the dual Feynman transform of a modular $\De$-operad $\O$ are essentially $\O$-algebras together with a choice of contracting homotopy. A natural question is whether one can adapt this construction to not necessarily contractible $\O$-algebras. In this section we will outline such a construction and explain how it provides a resolution of $\O$ closely related to the canonical resolution $\F_{\mathfrak{K}\otimes\D^*}\F_\D\O$. We will call our variation on the dual Feynman transform the \emph{Boardman-Vogt resolution} in this context because it is similar to the topological tree complex studied in \cite{BV}.
\subsection{Basic construction and description of algebras}
\begin{defi}Let $\D$ be a cocycle and $\O$ be a modular $\D$-operad. Its BV-resolution $\BVDo$ is the extended modular $\D$-operad freely generated over $\O_+$ by an odd operation $s$ and an even operation $t$, both in $ \BVDo((0,2))$, subject to the relations:\begin{itemize}\item
$s^2$=0;\item $t^2=t$;\item $st=ts=0$.
\end{itemize}
The differential $d$ on $\BVDo$ extends the differential on $\O$, and $d(s)=\mathbf{1}-t$ and  $d(t)=0$.
\end{defi}
\begin{rem}\label{special}
In keeping with our usual notational conventions from Section \ref{maincon} we can write $\left(\O[s,t]/(s^2,t^2-t,st)\right)_+$ for the BV-resolution of $\O$ (without the differential). There is an obvious surjection $\BVDo\ra\FvDo$ obtained by setting $t$ to zero. In particular, any $\FvDeo$-algebra automatically becomes a $\BVDeo$-algebra.
Furthermore, setting $s$ to zero we obtain an augmentation $\BVDo\ra \O_+$.  Taking in particular the case $\D=\De$, any $\O$-algebra $V$ gives canonically a $\BVDeo$-algebra. The operators $s$ and $t$ act on $V$ as the zero and identity operators respectively.
\end{rem}

We have the following analogue of Theorem \ref{dualalg}:
\begin{theorem}\label{dualalg1}
Let $\O$ be a modular $\De$-operad and $V$ be a dg vector space with an inner product $\langle,\rangle$ of degree $d$. Then the structure of an $\BVDeo$-algebra on $V$ is equivalent to the following data:
\begin{enumerate}\item
The structure of an $\O$-algebra on $V$.\item
An odd operator $s:V\ra V$ such that
\begin{itemize}\item
$s^2=0$;\item
$\langle s(a),b\rangle=(-1)^{|a|}\langle a,s(b)\rangle$.
\end{itemize}\item
An even operator $t:V\ra V$ such that
\begin{itemize}
\item $t^2=t$;
\item
$dt=td$;\item
$\langle t(a),b\rangle=\langle a,t(b)\rangle$.\end{itemize}
\item The following identities for the operators $s$ and $t$ hold:
\begin{itemize}\item
$st=ts=0$;\item
$(ds+sd)(a)=a-t(a)$ for any $a\in V.$
\end{itemize}
\end{enumerate}
\end{theorem}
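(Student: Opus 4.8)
The plan is to mirror the proof of Theorem \ref{dualalg} almost verbatim, simply tracking the additional generator $t$ and its relations through the adjunction that defines an $\BVDeo$-algebra. Recall that by definition an $\BVDeo$-algebra structure on $V$ is a homomorphism of extended modular $\De$-operads $\BVDeo\ra\tilde{\mathcal E}$ which sends the unit $\mathbf 1$ to the identity operator on $V$. Since $\BVDeo$ is freely generated over $\O_+$ by the operations $s$ and $t$ subject to the stated relations, such a homomorphism is determined by (i) a homomorphism $\O\ra\tilde{\mathcal E}$, i.e. an $\O$-algebra structure on $V$, together with (ii) the images of $s$ and $t$ under the induced map on the $(0,2)$-components.

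**First I would** recall that $\tilde{\mathcal E}((0,2))\cong\Hom(V,V)$, so that the images of $s$ and $t$ are honest operators $s,t:V\ra V$, with $s$ odd and $t$ even since the action of $\S_2$ on the generators is trivial. The operadic relations defining $\BVDeo$ then translate directly: $s^2=0$ becomes $s^2=0$ as operators; $t^2=t$ becomes the idempotency $t^2=t$; and $st=ts=0$ becomes the corresponding operator identities. This yields the first bullet of part (2), all of the first two bullets of part (3), and the first bullet of part (4). **Next I would** observe that the $\De$-twisting forces compatibility with the inner product exactly as in Theorem \ref{dualalg}: the $\S_2$-invariance of $s$ in the operadic sense translates into $\langle s(a),b\rangle=(-1)^{|a|}\langle a,s(b)\rangle$, and likewise the invariance of the \emph{even} operation $t$ gives $\langle t(a),b\rangle=\langle a,t(b)\rangle$, completing the scalar-product conditions in parts (2) and (3).

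**The remaining point** is to extract the differential conditions from the structure of $\BVDeo$ as a \emph{dg} operad. A homomorphism of dg operads must commute with the differentials, so applying the map to the defining relations $d(s)=\mathbf 1-t$ and $d(t)=0$ yields, on the level of operators, $(ds+sd)(a)=a-t(a)$ for all $a\in V$ (using $d(\mathbf 1)=0$ and that $\mathbf 1$ acts as the identity) and $dt=td$. This supplies the last bullet of part (3) and the last bullet of part (4). Conversely, any collection of data $(s,t)$ satisfying all the listed conditions assembles into a well-defined operad homomorphism, since the relations we have imposed are precisely the defining relations of $\BVDeo$ and no others; this gives the reverse implication.

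**The main obstacle** I anticipate is purely bookkeeping rather than conceptual: one must check carefully that the sign conventions in the $\S_2$-invariance condition produce the stated $(-1)^{|a|}$ in the $s$-compatibility but \emph{no} sign in the $t$-compatibility, this discrepancy being a consequence of $s$ being odd and $t$ even. There is also a small subtlety in confirming that the idempotency relation $t^2=t$, rather than the more familiar $t^2=\mathbf 1$ or $t^2=0$, is the correct operadic reading; this follows directly from the defining relation $t^2=t$ in $\BVDo$. Once these signs and the bijective correspondence of relations are verified, the equivalence is complete, and I would close by noting that this is the natural generalization of Theorem \ref{dualalg}, recovering it upon setting $t=0$.
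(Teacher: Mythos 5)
Your proposal is correct and follows exactly the route the paper takes: the paper's own proof simply states that the theorem "follows directly from definitions" (with the single remark that the invariance of $t$ carries no sign because $t$ is even), and your argument is precisely that unpacking, modeled on the proof of Theorem~\ref{dualalg} — generators $s,t$ map to operators in $\tilde{\mathcal{E}}((0,2))\cong\Hom(V,V)$, the operadic relations and $\S_2$-invariance give the operator and inner-product identities, and the chain-map condition applied to $d(s)=\mathbf{1}-t$, $d(t)=0$ gives the homotopy identity and $dt=td$. Your sign bookkeeping (the $(-1)^{|a|}$ for odd $s$, no sign for even $t$) matches the paper's one explicit remark, so nothing is missing.
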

\begin{proof}
This theorem follows directly from definitions; note that the invariance property of $t$ with respect to the inner product $\langle,\rangle$ has no sign since $t$ is an even operator.
\end{proof}

\begin{example}\label{Hodge}
An example of such a structure comes from any $\O$-algebra. Such an algebra $V$ is a dg vector space with an inner product $\langle, \rangle$; the differential is compatible with the inner product in the sense that $\langle da,b\rangle+(-1)^{|a|}\langle a,db\rangle=0$. Let $V_0=\Ker d$ and $U=\im d$. Choose a complement $W$ to $U$ inside $V_0$ so that $W\oplus U=V_0$. Then $U$ is an maximal isotropic subspace of $W^\perp$; choose an isotropic complement $U^\prime$. We have therefore
\[V=W\oplus(U\oplus U^\prime).\]
It is clear that the inner product $\langle, \rangle$ is nondegenerate on $W\cong H_\bullet(V)$ and the pairing between $U$ and $U^\prime$ is likewise nondegenerate.

Define the operator $t:V\ra V$ to be the projection onto $W$ and $t^\prime$ to be the projection onto $U\oplus U^{\prime}$. Set $s:=\left(d\mid_{U\oplus U^{\prime}}\right)^{-1}\circ t^\prime$; it is an analogue of the Green operator in Hodge theory.
Then it is easy to see that the $\O$-algebra $V$ supplied with the operators $s$ and $t$ satisfies the conditions of Theorem \ref{dualalg1} and so  it is an algebra over the BV-resolution of $\O$.

It is important to stress that the structure of a $\BVDo$-algebra on $V$ described here is different from one obtained via the augmentation $\BVDo\ra \O$ as in Remark \ref{special}. In particular, the operator $s$ is not zero on $V$ unless $V$ has vanishing differential.
\end{example}
\subsection{Stable BV-graph complexes}
In this subsection we will give another, more concrete, description of the BV-resolution of a modular $\D$-operad as a kind of decorated graph complex. We start by defining the categories $\Gamma_{\BV}((g,n))$ of \emph{two-colored stable graphs} or \emph{BV-graphs}.

\begin{defi} A BV-graph is an extended stable graph with the following additional structure:
the set $\Edge(G)$ is partitioned into two subsets: those of black edges and of white edges. We also require that the unstable vertices, i.e. the bivalent vertices of genus $0$, only occur next to legs.
\end{defi}

 We will denote the set of black edges of a BV-graph $G$ by $\Edge_b(G)$. For typographic reasons we will draw the black edges using straight lines and the white edges using wiggly lines. Note that the \emph{legs} of a BV-graph have no color; to emphasize this we will draw the legs using dotted lines.

There is an obvious notion of isomorphism between two BV-graphs. The objects of $\Gamma_{\BV}((g,n))$ are BV-graphs of genus $g$ with $n$ labeled legs. There are three types of morphisms in $\Gamma_{\BV}((g,n))$ which generate all morphisms:\begin{enumerate}\item
isomorphisms of BV-graphs preserving labelings.\item
contractions of black edges. For a black edge $e\in\Edge_b(G)$ this operation will be written as $G\mapsto G_e$.\item
replacing a black edge by a white edge. For a black edge $e\in\Edge_b(G)$ this operation will be written as $G\mapsto G^e$.\end{enumerate}

We can now introduce the two-colored graph version of the BV-resolution of a modular $\D$-operad $\O$ which will be temporarily  denoted by $\BV^\prime_{\D}(\O)$.
 Given a BV-graph $G$ we put
 $$\O[G]:=\Det(\Edge_b(G))\otimes{\D}(G)\otimes\O((G)).$$
 Here the space $\O((G))$ of $\O$-decorations on $G$ and the twisting
 $\D(G)$ are defined by forgetting the coloring on $G$ and just regarding it as an extended stable graph.  Let  $e\in \Edge_b(G)$ be a black edge. Then the contraction $G\mapsto G_e$ determines a parity-reversing linear map
$d_e:\O[G]\ra\O[G_e]$ given by the operadic composition in $\O$; similarly the operation $G\mapsto G^e$ determines tautologically a map $d^e:\O[G]\ra\O[G^e]$.

We define  \[\BVDotemp((g,n)):=\colim_{G\in\Iso\Gamma_{\BV}((g,n))} \O[G],\]
with differential $d$ determined by the formula
\begin{equation}\label{diff1}d|_{\O[G]}=d_\O+\sum_{e\in\Edge_b(G)}[d_e+d^e],\end{equation}
where $d_\O$ is the internal differential induced by the differential on $\O$.
\begin{prop}\label{twoBVs}
There is an isomorphism of complexes
\[\BVDo((g,0))\cong\BVDotemp((g,0)).\]
The operadic composition in $\BVDo$ is given by glueing decorated BV-graphs according to the following rule: graft legs to make a new edge and then contract the newly formed edge, using the operadic composition in $\O$.
\end{prop}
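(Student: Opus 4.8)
The plan is to establish Proposition~\ref{twoBVs} by comparing two presentations of the same object: the abstract operad $\BVDo$ defined via generators and relations, and the concrete two-colored graph complex $\BVDotemp$. The overall strategy parallels the proof of Lemma~\ref{key}, which handled the simpler case of $\FvDo$ (i.e. a single generator $s$ with $s^2=0$). First I would apply the analogue of Lemma~\ref{key} to the free extended modular $\D$-operad $\left(\O[s,t]\right)_+$ generated over $\O_+$ by \emph{two} bivalent operations $s$ and $t$ in bidegree $(0,2)$, before imposing any relations. Exactly as in Lemma~\ref{key}, each edge of a decorated extended stable graph now carries a \emph{label}, either $s$ or $t$ (corresponding to which generator is inscribed on the unstable bivalent vertex absorbed into that edge), and strings of unstable bivalent vertices correspond to iterated compositions $s\circ s$, $s\circ t$, $t\circ t$, etc. This gives a description of $\left(\O[s,t]\right)_+((g,n))$ as a colimit over extended stable graphs whose edges are decorated by words in $s$ and $t$.

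The second step is to impose the three relations $s^2=0$, $t^2=t$, $st=ts=0$ and read off their effect on the graph description. The relations $st=ts=0$ and $s^2=0$ kill precisely those edge-decorations containing an $s$ adjacent to anything (an $s$ followed by $s$ or $t$), so the only surviving strings containing $s$ are single $s$'s, which must occur only at legs (as in the proof of Theorem~\ref{main}). The relation $t^2=t$ collapses every string of consecutive $t$'s to a single $t$; combined with the requirement that unstable vertices occur only next to legs, this means each internal edge is decorated by either a bare composition (an ordinary edge, obtained by contraction) or a single $t$ (a ``$t$-edge''). The natural identification is then that black edges of a BV-graph correspond to ordinary (contracted) internal edges, while white edges correspond to edges decorated by the idempotent $t$; the $\Det(\Edge_b(G))$ twisting records the degree-one generator $s$ only where it survives, matching the formula $\O[G]=\Det(\Edge_b(G))\otimes\D(G)\otimes\O((G))$. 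Setting $n=0$ eliminates the legs where bare $s$'s could live, which is why the statement is restricted to $\BVDo((g,0))$. I would then check that the differential~(\ref{diff1}) matches: the term $d(s)=\mathbf 1-t$ splits into the two summands $d_e$ (the $\mathbf 1$-part, which contracts a black edge via operadic composition) and $d^e$ (the $-t$-part, which recolors a black edge white), while $d(t)=0$ accounts for the absence of any differential on white edges.

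For the operadic composition statement, I would argue exactly as in the Remark following Theorem~\ref{main}: composition in an extended modular $\D$-operad obtained by adjoining units is glueing of decorated graphs followed by contraction of the newly formed edge, since the composite passes through the functor $\Ga((g,n))\ra\Gamma((g,n))$ that absorbs the new bivalent vertex. Here the grafted edge is a black (ordinary) edge, and contracting it uses the operadic composition in $\O$, which is precisely the stated rule.

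The reduction from the free case to a general modular $\D$-operad $\O$ proceeds as in Lemma~\ref{key}: assume first $\O=\M_\D\V$ is free on a stable $\S$-module, verify the isomorphism directly in that case, and then pass to an arbitrary $\O$ using the canonical split coequalizer $\M_\D\M_\D\O\rightrightarrows\M_\D\O\ra\O$ and the fact that split coequalizers are preserved by all functors, including the colimit constructions defining both $\BVDo$ and $\BVDotemp$. The main obstacle, as in Lemma~\ref{key}, is bookkeeping rather than conceptual difficulty: one must carefully track the $\Aut(G)$-actions and the induced homomorphisms $\Aut(G)\ra\Aut(G')$ under the passage from the fully-decorated graph to its two-colored reduct, and verify that the idempotency of $t$ interacts correctly with these symmetrizations so that no spurious factors or sign discrepancies arise when strings of $t$'s collapse. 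I expect the differential-matching to be the most delicate point, since one must confirm that the signs produced by the $\Det(\Edge_b(G))$ twisting under the recoloring map $d^e$ and the contraction map $d_e$ together reproduce the single relation $d(s)=\mathbf 1-t$ consistently across all graphs.
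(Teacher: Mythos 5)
Your proposal follows the same route as the paper's proof --- the two-generator analogue of Lemma~\ref{key} applied to $\O[s,t]$, a pictorial reading of the three relations, a matching of the differential with equation~(\ref{diff1}), and the split-coequalizer reduction to the free case --- but the central identification step is wrong, and the error would derail the argument. In the graph description provided by Lemma~\ref{key}, \emph{every} edge of the reduced graph carries exactly one adjoined generator: edges joining two stable vertices are contracted away in the passage $G\mapsto G'$, so there are no ``bare'' edges, and the unstable bivalent vertices are the \emph{composition points} between generators. What the relations $s^2=st=ts=0$ and $t^2=t$ confine to the legs are these composition points, not the letters $s$ themselves; the same is true in the proof of Theorem~\ref{main}, which you misquote --- there it is the dots (unstable vertices) that may only sit on legs, while for $n=0$ every internal edge of a graph in $\FvDo((g,0))$ carries an $s$. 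Consequently, for $n=0$ the surviving graphs here are stable graphs in which each edge carries a single $s$ or a single $t$, and the correct dictionary is black edge $\leftrightarrow s$, white edge $\leftrightarrow t$ --- not, as you claim, ``black edges $=$ ordinary (contracted) internal edges'' with $s$ surviving only at legs.

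This is not a cosmetic slip; your own text is inconsistent with it. Under your reading no $s$ survives at all when $n=0$, so nothing would account for the twist $\Det(\Edge_b(G))$ in the formula $\O[G]=\Det(\Edge_b(G))\otimes\D(G)\otimes\O((G))$, and the differential would have no source: your (correct) matching of equation~(\ref{diff1}), with $d_e$ coming from the $\mathbf 1$-part and $d^e$ from the $-t$-part of $d(s)=\mathbf 1 - t$, makes sense only because each black edge carries a copy of $s$ on which $d$ acts. For the same reason, the restriction to $n=0$ is not there to ``eliminate the legs where bare $s$'s could live''; it is there so that no unstable bivalent vertices remain at all, since for $n>0$ both $\BVDo((g,n))$ and the category of BV-graphs admit generators sitting on legs and the comparison needs extra care. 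Similarly, in the composition rule the newly grafted edge is precisely a \emph{bare} (uncolored) edge --- which is exactly why it must be contracted via the $\O$-composition --- whereas a genuine black edge would persist; your description of the composition reaches the right conclusion only because it repeats the same conflation. With the dictionary corrected, the rest of your outline coincides with the paper's proof.
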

\begin{proof}
Using Lemma \ref{key} we see that the space $\O[s,t]$ is a space
of twisted $\O$-decorated extended stable graphs with \emph{two types of edges}: black edges corresponding to $s$ and the white edges corresponding to $t$.

The relation $s^2=0$ is interpreted pictorially as $\xymatrix{\ar@{.}[r]&*=0{\bullet}\ar@{-}[r]&*=0{\bullet}\ar@{-}[r]&*=0{\bullet}
\ar@{.}[r]&}=0$; thus graphs having an unstable bivalent vertex separating two black edges are zero. Similarly the relation $t^2=t$ can be rewritten as
\[\xymatrix{\ar@{.}[r]&*=0{\bullet}\ar@{~}[r]&*=0{\bullet}\ar@{~}[r]&*=0{\bullet}
\ar@{.}[r]&
}=\xymatrix{
\ar@{.}[r]&*=0{\bullet}\ar@{~}[r]&*=0{\bullet}\ar@{.}[r]&}.\] In other words the unstable bivalent vertices separating two white edges could simply be forgotten. Finally $st=ts=0$ is equivalent to
\[\xymatrix{\ar@{.}[r]&*=0{\bullet}\ar@{-}[r]&*=0{\bullet}\ar@{~}[r]&*=0{\bullet}
\ar@{.}[r]&
}=\xymatrix{
\ar@{.}[r]&*=0{\bullet}\ar@{~}[r]&*=0{\bullet}\ar@{-}[r]&*=0{\bullet}
\ar@{.}[r]&}
=0.\]
This means that the unstable bivalent vertices only occur next to legs. Next, since $d(t)=0$  the formula for the differential only involves the $s$-edges (i.e. the black edges); finally the formula $d(s)=\mathbf{1}-t$ precisely corresponds to equation (\ref{diff1}).
\end{proof}
\begin{rem}Note that the subspace in $\BVDo((g,n))=\BVDotemp((g,n))$ corresponding to $\O$-decorated graphs having \emph{at least one white edge} is closed with respect to the differential. The corresponding quotient dg vector space consists of $\O$-decorated graphs having only black edges, i.e. the usual commutative stable graphs. In the formula for the differential the term $d^e$ goes away and we are left with the usual graph differential as in the $\O$-graph complex (or the dual Feynman transform of the modular $\D$-operad $\O$). We obtain a surjective homomorphism
$\BVDo((g,n))\ra\FvDo((g,n))$; this is the same map as the one described in Remark \ref{special}.
\end{rem}

We conclude by explaining how $\BVDo$ is closely related to the standard free resolution of $\O$ provided by the twice-iterated Feynman transform.
Let $\Gamma_\bv((g,n))$ be the subset of $\Gamma_\BV((g,n))$ containing BV-graphs without unstable vertices. To any $G\in\Gamma_\bv((g,n)$ we can associate the BV-graph $G_t$ obtained from $G$ by glueing a white edge onto each leg. Then the image of the map $G\mapsto G_t$ consists of the BV-graphs all of whose legs are connected to white edges by unstable bivalent vertices.

We define $\bvDo$ to be the truncation of $\BVDo$ by the idempotent $t$, i.e. the suboperad of $\BVDo$ defined by
$$\bvDo((g,n)):=\colim_{G\in\Iso\Gamma_{\bv}((g,n))} \O[G_t].$$ The element $t$ serves as an operad unit for $\bvDo$.

\begin{prop} Let $\D$ be a cocyle, and let $\O$ be a modular $\D$-operad.
\begin{enumerate}
\item
We have an isomorphism
$\bvDo\cong\left(\M_{\D}\M_{\mathfrak{K}\otimes\D}\O\right)_+$ of extended modular $\D$-operads in $\Z/2$-graded vector spaces (forgetting the differential). Moreover, provided $\O((g,n))$ is finite dimensional for all $g$ and $n$, the extended modular $\D$-operad $\bvDo$ is isomorphic to the double Feynman transform
$\left(\F_{\mathfrak{K}\otimes\D^*} \F_\D \O\right)_+$.
\item The inclusion $\bvDo\hookrightarrow\BVDo$ is a quasi-isomorphism of extended modular $\D$-operads.
\item
The embedding $\O_+\hookrightarrow\BVDo$ and the augmentation
$\BVDo\ra \O_+$ are quasi-isomorphisms of extended modular operads.
\end{enumerate}
\end{prop}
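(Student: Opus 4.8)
The plan is to handle the three parts in order, using throughout the two-colored graph description of $\BVDo$ from Proposition \ref{twoBVs}, in which black edges encode $s$ and white edges encode $t$.

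For part (1) I would first produce the underlying graded isomorphism by flattening nested graphs. An element of $\M_\D\M_{\mathfrak{K}\otimes\D}\O$ is an outer $\D$-twisted graph $\Gamma$ whose vertices $v$ are decorated by $\mathfrak{K}\otimes\D$-twisted $\O$-decorated inner graphs $\gamma_v$; erasing the two-level structure yields a single graph $G$ whose edges split into outer edges (those of $\Gamma$) and inner edges (those of the $\gamma_v$). Under the dictionary [black edge] $\leftrightarrow$ [inner edge] and [white edge] $\leftrightarrow$ [outer edge] this matches the two-colored graphs of Proposition \ref{twoBVs}, with the white edge grafted onto each leg playing the role of the outer operadic unit (this is exactly what the passage to $\bvDo$ and the $(-)_+$ record). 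The twists match because $\bigotimes_v\mathfrak{K}(\gamma_v)=\Det(\text{inner edges})=\Det(\Edge_b(G))$, while the cocycle property of $\D$ identifies $\D(\Gamma)\otimes\bigotimes_v\D(\gamma_v)$ with $\D(G)$; keeping track of these determinants and their signs is where the main bookkeeping lies, carried over verbatim from \cite{GeK}. For the second assertion I would, under finite dimensionality, dualize $\M_{\mathfrak{K}\otimes\D}\O\cong(\FDo)^*$ and use $(\mathfrak{K}\otimes\D^*)^*\cong\mathfrak{K}\otimes\D$ together with the canonical triviality of $\mathfrak{K}\otimes\mathfrak{K}$ to get $\M_\D((\FDo)^*)\cong\F_{\mathfrak{K}\otimes\D^*}\FDo$. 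Finally I would check that the transported differential (\ref{diff1}) is the double Feynman transform differential: $d_\O$ is the internal part, $d_e$ (contraction of a black edge by $\O$-composition) is dual to the inner edge-expansion of $\FDo$, and $d^e$ (recoloring a black edge white) is precisely the outer edge-expansion of the second Feynman transform.

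For part (2) I would first observe that for $n=0$ there are no legs, hence no unstable vertices, so $\Gamma_\bv((g,0))=\Gamma_\BV((g,0))$ and the inclusion is an equality. For $n>0$ the situation is purely local at the legs: since unstable bivalent vertices occur only next to legs and the relations $s^2=0$, $t^2=t$, $st=ts=0$ collapse every leg-tail, each leg contributes a tensor factor $L=\langle\mathbf{1},s,t\rangle$ with $d(s)=\mathbf{1}-t$ and $d(\mathbf{1})=d(t)=0$, commuting with the internal differential. The homology of $L$ is one-dimensional, generated by $[t]=[\mathbf{1}]$, so the subcomplex inclusion $\langle t\rangle\hookrightarrow L$ is a quasi-isomorphism. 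As $\bvDo$ is exactly the subcomplex carrying $t$ on every leg, applying the Künneth formula leg by leg shows $\bvDo\hookrightarrow\BVDo$ is a quasi-isomorphism.

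For part (3) I would begin by noting that the augmentation $\pi\colon\BVDo\to\O_+$ (namely $s\mapsto 0$, $t\mapsto\mathbf{1}$) restricts to the identity on the image of $i\colon\O_+\hookrightarrow\BVDo$, so $\pi i=\mathrm{id}_{\O_+}$; by two-out-of-three it then suffices to show that one of $i$, $\pi$ is a quasi-isomorphism. Conceptually this is immediate: part (2) gives $\BVDo\simeq\bvDo$, part (1) identifies $\bvDo$ with the bar-cobar (double Feynman transform) construction, and by the standard Getzler--Kapranov acyclicity of this construction the latter resolves $\O$, so $(\F_{\mathfrak{K}\otimes\D^*}\FDo)_+\simeq\O_+$. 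To obtain the statement without finite-dimensionality I would use only the graded part of (1) and argue the acyclicity directly, by the contracting homotopy coming from the relation $d(s)=\mathbf{1}-t$ (the finite-dimensionality is needed only to \emph{name} the resolution $\F\F\O$, not for its acyclicity). I expect this last step to be the main obstacle: verifying directly that the complement of $\O_+$ inside $\BVDo$ is acyclic — equivalently, producing the homotopy $h$ and checking the identity $dh+hd=\mathrm{id}-i\pi$ with all the determinant signs — is the genuinely laborious part, even though it parallels the corresponding acyclicity argument of \cite{GeK}.
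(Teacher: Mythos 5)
Your proposal is correct and takes essentially the same route as the paper: flattening two-level graphs for (1) (the paper simply cites Lemma 5.5 of \cite{GeK} for this instead of re-deriving it), the identical leg-by-leg reduction of (2) to the contractible three-dimensional complex spanned by $\mathbf{1}$, $s$, $t$ with $d(s)=\mathbf{1}-t$ (note the paper treats the degenerate case $(g,n)=(0,2)$, where there is no stable core graph to hang legs on, separately --- as your K\"unneth argument should too), and for (3) the retraction $\pi i=\mathrm{id}$ combined with two-out-of-three and the Getzler--Kapranov acyclicity theorem (Theorem 5.4 of \cite{GeK}). Your closing observation --- that finite-dimensionality is needed only to identify $\bvDo$ with $\F_{\mathfrak{K}\otimes\D^*}\F_\D\O$, not for the acyclicity used in (3) --- is correct and consistent with how the paper argues: it applies the cited acyclicity to the dualization-free augmentation $\bvDo\cong\left(\M_{\D}\M_{\mathfrak{K}\otimes\D}\O\right)_+\ra\O_+$ rather than to the double Feynman transform, so the laborious direct homotopy you anticipate is not actually required.
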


\begin{proof} We first note that $\bvDo$ and $\left(\M_{\D}\M_{\mathfrak{K}\otimes\D}\O\right)_+$ are isomorphic as (extended) stable $\S$-modules; this is the content of Lemma 5.5 of \cite{GeK}. The operadic compositions match up, because the usual glueing of BV-graphs $G$ and $G'$, in which grafted legs form a white edge, corresponds to the glueing of $G_t$ and $G'_t$ described in  Proposition~\ref{twoBVs} (the same applies to self-glueings). If each $\O((g,n))$ is finite dimensional, then it is straightforward to check that the differentials on
$\F_{\mathfrak{K}\otimes\D^*} \F_\D \O \cong \M_{\D}\M_{\mathfrak{K}\otimes\D}\O$ and $\bvDo$ are the same.

Next we turn to the statement that
$\bvDo((g,n))\hookrightarrow\BVDo((g,n))$ is a quasi-isomorphism. For $(g,n)=(0,2)$ this is clear: the inclusion of $\ground\cdot t$ into the dg vector space  $W=\ground\cdot 1\oplus\ground\cdot t\oplus\ground\cdot\mathbf{1}$ with $d(s)=\mathbf{1}-t$ and $d(t)=0$, is indeed a quasi-isomorphism.
When $(g,n)\neq(0,2)$, we need to consider for each $G\in\Gamma_\bv((g,n))$ the $3^n$ BV-graphs obtained by tacking onto each leg of $G$ either a white edge, a black edge or nothing; choosing white edges for all legs produces $G_t$.
Now it all boils down to showing that the inclusion $(\ground\cdot t)^{\otimes n} \hookrightarrow
W^{\otimes n}$ is a quasi-isomorphism, which is obvious.

Finally, the composition
$\bvDo\hookrightarrow\BVDo\ra\O_+$ is a quasi-isomorphism, by Theorem 5,4 of \cite{GeK}. Since the composition $\O_+\hookrightarrow\BVDo\ra\O_+$ is the identity, the last statement of the proposition now follows.
\end{proof}


\begin{rem}
Let $V$ be an algebra over a modular $\De$-operad $\O$. According to
Example~\ref{Hodge}, a choice of Hodge decomposition on $V$ leads to a $\BVDeo$-algebra structure on $V$. Now we may truncate with the idempotent $t$, inducing a $\bvDeo$-algebra structure on $tV=H(V)$. It is tempting to regard $H(V)$ equipped with this `homotopy $\O$-algebra structure' as an explicit `minimal model' for $\O$.

In the special case that $V$ is a contractible $\O$-algebra, we obtain a $\bvDeo$-algebra structure on $H(V)=0$. This provides a slightly different take on the dual construction. Indeed, $\bvDeo$-algebra structures on the zero vector space are in one-to-one correspondence with cocycles on $\bvDeo(({\geq0}))/\langle\bvDeo(({>0}))\rangle\cong\FvDeo((0))$, and two algebra structures are quasi-isomorphic if and only if the corresponding cocycles are cohomologous. Thus
 the dual construction applied to a contractible $\O$-algebra could be viewed as the identification of its minimal model.

In this connection we mention the following, apparently intractable question: given a modular $\De$-operad $\O$, can every cohomology class on $\FvDeo((0))$ be realized by a contractible $\O$-algebra? Note that for two quasi-isomorphic operads their homotopy categories of algebras are isomorphic. If the corresponding statement were true for \emph{modular $\De$-operads} then applying it to modular $\De$-operads $\O$ and $\bvDeo$ and a given cocycle on $\FvDeo((0))$ considered as a $\bvDeo$-structure on the zero vector space we would get a positive answer to the posed question. However, even the \emph{correct definition} of homotopy categories of algebras over modular operads is not clear: we saw that a contractible (or even zero) $\O$-algebra does not necessarily have to be regarded as being trivial.
\end{rem}

\end{document}